\newcommand*{\e}{\mathop{}\!\mathrm{e}}
\newcommand{\etal}{\textit{et al.}}
\newcommand{\rd}{\mathrm{d}}
\newcommand{\R}{\mathbb{R}}
\begin{document}

\title{Machine learning-based moment closure model for the linear Boltzmann equation with uncertainties%\thanks{Grants or other notes
%about the article that should go on the front page should be
%placed here. General acknowledgments should be placed at the end of the article.}
}

\titlerunning{ML-based moment closure model for the linear Boltzmann equation}        % if too long for running head

\author{Juntao Huang \and Liu Liu \and Kunlun Qi \and Jiayu Wan}

%\authorrunning{Short form of author list} % if too long for running head

\institute{J. Huang \at
              Department of Mathematics and Statistics, Texas Tech University, Lubbock, TX, 79409, USA. Research is partially supported by NSF DMS-2309655 and DOE DE-SC0023164. \\
              \email{juntao.huang@ttu.edu}          
           \and
           L. Liu \at
           The Chinese University of Hong Kong, Hong Kong.  L. Liu acknowledges the support by National Key R\&D Program of China (2021YFA1001200), Ministry of Science and Technology in China, Early Career Scheme (24301021) and General Research Fund (14303022 \& 14301423) funded by Research Grants Council of Hong Kong. \\
        \email{lliu@math.cuhk.edu.hk}
              \and 
              K. Qi \at 
             School of Mathematics,  University of Minnesota - Twin Cities, Minneapolis, MN, 55455 USA. \\
            \email{kqi@umn.edu, kunlunqi.math@gmail.com} 
            \and 
            J. Wan \at
          The Chinese University of Hong Kong, Hong Kong \\
         \email{jiayuwan@cuhk.edu.hk}
    }

%\date{Received: date / Accepted: date}
% The correct dates will be entered by the editor

\maketitle

\begin{abstract}
The Boltzmann equation, a fundamental equation in kinetic theory, serves as a bridge between microscopic particle dynamics and macroscopic continuum mechanics. However, deriving closed macroscopic moment systems from the Boltzmann equation remains a long-standing challenge due to the intrinsic non-closure of the moment hierarchy.
In this paper, we propose a machine learning (ML)-based moment closure model for the linear Boltzmann equation, addressing both the deterministic and stochastic settings. Our approach leverages neural networks to learn the spatial gradient of the unclosed highest-order moment, enabling effective training through natural output normalization. 
For the deterministic problem, to ensure global hyperbolicity and stability, we derive and apply the constraints that enforce symmetrizable hyperbolicity of the system. 
For the stochastic problem, we adopt the generalized polynomial chaos (gPC)-based stochastic Galerkin method to discretize the random variables, resulting in a system for which the approach in the deterministic case can be used similarly. Several numerical experiments are shown to demonstrate the effectiveness and accuracy of our ML-based moment closure model for the linear Boltzmann equation with or without uncertainties. 

\keywords{Moment Closure \and Linear Boltzmann Equation \and Machine Learning \and Uncertainty Quantification \and Hyperbolicity}
%\subclass{}
\end{abstract}

%-------------------------------------------------
\section{Introduction}
\label{sec:intro}

\textbf{Background:} 
As the bridge between microscopic particle dynamics and macroscopic continuum mechanics, the kinetic equations have been widely used in many areas such as mechanics, rarefied gas, plasma physics, astrophysics, semiconductor device modeling, and social and biological sciences \cite{Semi-Book, Villani02}. They describe the non-equilibrium dynamics of a system composed of a large number of particles and bridge atomistic and continuum models in the hierarchy of multiscale modeling. The Boltzmann-type equation, as one of the most representative models in kinetic theory, provides a power tool to describe molecular gas dynamics, radiative transfer, plasma physics, and polymer flow \cite{A15}. They have significant impacts in designing, optimization, control, and inverse problems. For example, it can be used in the design of semiconductor devices, topology optimization of gas flow channels, or risk management in quantitative finance \cite{CS21}. Many of these applications often require finding unknown or optimal parameters in the Boltzmann-type equations or mean-field models \cite{AHP15, Caflisch, CFP13, Cheng11}.

In addition, kinetic equations typically involve various sources of uncertainty, such as modeling errors, imprecise measurements, and uncertain initial conditions. As a result, addressing uncertainty quantification (UQ) becomes essential for evaluating, validating, and improving the underlying models, underscoring our project's significance. In particular, the collision kernel or scattering cross-section in the Boltzmann equation governs the transition rates during particle collisions. Calculating this collision kernel from first principles is highly complex, and in practice, heuristic approximations or empirical data are often used, inevitably introducing uncertainties. Additionally, uncertainties may stem from inaccurate measurements of initial or boundary conditions, as well as from source terms, further compounding the uncertainties in the model. For numerical studies of the Boltzmann equation and other kinetic models with or without randomness, we refer readers to works such as \cite{HPY, JXZ15, Jin-ICM, PZ2020} and \cite{pareschi, MP06, MPR13, HQ20, HQY21}, and particularly for the linear Boltzmann equation \cite{LW2017, Poette2}. 
Among the various numerical approaches, the generalized polynomial chaos (gPC)-based stochastic Galerkin (SG) method and its variations have been widely adopted, demonstrating success in a range of applications \cite{Xiu}. Beyond numerical simulations, theoretical studies have established the stability and convergence of these methods. Spectral convergence for the gPC-based SG method was demonstrated in \cite{HJ16, LQ2024-1, LQ2024-2}, while \cite{LJ18} and \cite{DJL19} introduced a robust framework based on hypocoercivity to perform local sensitivity analysis for a class of multiscale, inhomogeneous kinetic equations with random uncertainties — approximated using the gPC-based SG method. In addition, it is also worth mentioning the recent progress in using the Stochastic Galerkin particle methods for the kinetic equations with uncertainties \cite{MNRZ2025, MPZ2023}. For further reference, we point readers to the recent collection \cite{JinPareschi} and the survey \cite{parUQ}.

An important approximation strategy in kinetic theory is given by Grad's moment method \cite{Grad, Grad-book} based on the expansion of the distribution function in Hermite polynomials, which extends the set of variables of a continuum model
beyond the fields of density, velocity, and temperature. The additional variables are given
by higher-order moments of the distribution function. Indeed, to describe the kinetic effects in highly non-equilibrium regimes, many moments are needed, which results in a large system of PDEs to be solved. It is known that Struchtrup and Torrilhon \cite{TS} have rigorously derived the regularized 13-moment equations from the Boltzmann equation of the monatomic gases. The final system consists of evolution equations for the 13 fields of density, velocity, temperature, stress tensor, and heat flux. See \cite{Levermore} for regularized moment equations and moment closure hierarchies for kinetic theory. In 1950's, Grad solved the closure problem by assuming the distribution can be expressed around the Maxwellian function. According to \cite{ST2003}, Grad's choice, or other nonlinear closures such as Pearson's \cite{Torrilhon} is problematic, as it exhibits negative values in the tail for non-vanishing heat flux, which could be the reason for the loss of hyperbolicity of the moment equations. In general, the moment closure is a challenging and important problem in the approximation theory for kinetic models. Many numerical computational methods have been developed based on the Grad moment approach \cite{CL2010, CFL_ICIAM}. However, these closure conditions may not be valid in practice, especially when there are shock profiles or complicated boundary conditions. 
We list some works here: for examples the $P_N$ model \cite{Chandrasekhar1944}, the filtered $P_N$ model \cite{RARO2013}, the positive $P_N$ model \cite{MM2010}, the entropy-based $M_N$ model \cite{AFH2019} and the $MP_N$ model \cite{FLZ2020,FLZ2020-2,LLZ2021}.
In this work, our goal is to use the machine learning approach to find an accurate closure system. 

With the recent development of data-driven methodology and machine learning (ML) techniques, some new approaches based on machine learning and neural networks have been proposed to solve the moment closure problem; since the relationship between the highest-order moment and lower-order moments is generally unknown, aside from the assumption that such a relationship exists, a neural network appears to be an ideal candidate to serve as a black-box model representing this relationship after training from the data, which are obtained by solving the kinetic equation. One of the groundbreaking frameworks is in \cite{HMME2019} by Han \etal, where they first used an autoencoder to learn a set of generalized moments that optimally represent the underlying velocity distribution, and then trained a moment closure model for these generalized moments to effectively capture the dynamics associated with the kinetic equation. By utilizing the conservation-dissipation formalism, a stable closure model is developed from irreversible thermodynamics for the Boltzmann-BGK equation in \cite{huang2021learning}, parameterized by a multilayer perception. In addition, Bois \etal in \cite{BFNV2022} introduced a nonlocal closure model for the Vlasov-Poisson system using a convolutional neural network. Furthermore, in \cite{ma2020machine, wang2020deep}, the widely recognized Hammett–Perkins Landau fluid closure model was studied by ML and neural network techniques. Recently, a neural network-based approximation to regularized entropy-based closure of the Boltzmann system was proposed in \cite{porteous2021data,schotthofer2022structure,SLFH2025}, to accurately compute the solution of the multi-dimensional moment system. 
In particular, we highlight a recent series of work \cite{Huang1,Huang2,Huang3} by Huang \etal, where they develop a new ML framework for the moment closure problem of the radiative transfer equation (RTE).

For the moment closure problems, the hyperbolicity of the derived moment system is critical to the well-posedness of the first-order partial differential equations \cite{Serre1}. In fact, Grad's pioneering work on moment closure in gas kinetic theory, presented in \cite{Grad}, laid the foundation for moment models. However, it was later shown in \cite{CFL2014KRM} that in the three-dimensional case, the equilibrium state for Grad's 13-moment model lies on the boundary of the hyperbolicity region. This limitation significantly restricts the applicability of the moment method. Consequently, this issue has garnered considerable attention, with numerous studies in the literature \cite{CFL2014CPAM, CFL2013CMS,FLZ2020} dedicated to developing globally hyperbolic moment systems. The classical philosophy in deriving and solving the moment system, instead of the kinetic equations themselves, is to pursue the balance between generic accuracy and practical computability. However, with recent advancements in ML and data-driven modeling \cite{BPK2016PNAS, HJE2018}, novel ML-based approaches \cite{HMME2019, scoggins2021machine,ma2020machine,MH2010JCP, BFNV2022} have emerged to address the moment closure problem, offering new potential for both accuracy and practicality. We refer the readers to \cite{Huang1} and the references therein for more recent progress in this field. Albeit the success of ML in the application of moment closure problems, it is worth mentioning that most of the aforementioned works do not ensure hyperbolicity or long-term stability, with the exception of the works in \cite{huang2021learning,porteous2021data,schotthofer2022structure,SLFH2025,Huang1,Huang2,Huang3}.

%--------------------------------------------------------
\textbf{Motivation and our contributions:}
In this paper, we focus on developing an ML-based moment closure model for the linear Boltzmann equation, addressing both deterministic and stochastic cases. Classical moment closure approaches approximate unclosed higher-order moments based on empirical assumptions, which may not hold in general \cite{CFL2014KRM}. Additionally, unclosed higher-order moments often vary widely in magnitude and can become very small on certain scales, such as in the optically thick regime for RTE \cite{Huang1}. This variability complicates neural network training, as the target function may be difficult to learn directly from lower-order moments without appropriate output normalization \cite{BFNV2022}. Therefore, rather than directly learning the unclosed higher-order moments \cite{HMME2019}, we opt to learn the spatial gradient of the unclosed moment using neural networks, drawing on a similar strategy proposed for RTE \cite{Huang1}.
For the deterministic case, we first derive the unclosed moment system using Hermite polynomials and their recurrence relations. A neural network incorporating gradients of lower-order moments with natural output normalization is then introduced to learn the gradient of the highest-order moment. To ensure long-term stability, we also introduce an approach inspired by \cite{Huang2, Huang3, christlieb2024bgk} that enforces global hyperbolicity in the ML-based moment closure model. This is achieved by constructing a symmetrizer (i.e., a symmetric positive definite matrix) for the closure system and deriving constraints that make the system globally symmetrizable hyperbolic.
For the stochastic case, we use the gPC-based SG method to discretize the random variable and derive a higher-dimensional deterministic moment system. The gradient-learning approach is also applied to this system, enabling an ML-based moment closure model.

\textbf{Organization of our paper:} The rest of this paper is organized as follows: We first introduce the linear Boltzmann equation and its associated moment closure problem in Section \ref{sec:Boltz}. In Section \ref{sec:ml}, we propose an ML-based moment closure model to learn the unclosed highest-order moment, where the random variables are handled by the stochastic Galerkin method. The data generation and the training of the neural networks are presented in Section \ref{sec:training}. We validate our proposed ML-based moment closure model by numerical examples in Section \ref{sec:numerical}. Some concluding remarks are given in Section \ref{sec:conclusion}.

%-------------------------------------------------
\section{Linear Boltzmann equation and moments system}
\label{sec:Boltz}

\subsection{Linear Boltzmann equation}
\label{subsec:semi}

The linear Boltzmann equation with random parameters is given by 
\begin{equation}\label{semi-Boltzmann}
    \partial_t f(t,x,v,z) + v \partial_x f(t,x,v,z) = Q(f)(t,x,v,z),
\end{equation}
with the initial condition
\begin{equation}\label{f0}
f(0,x,v,z) = f^{0}(x,v,z),
\end{equation}
where $f=f(t,x,v,z)$ is the probability density function at time $t$ and position $x \in \mathbb{R}$, with velocity variable $v \in \mathbb{R}$ and a random vector $z \in I_{z} \subset \mathbb{R}^{d_z}$ with known probability distribution characterizing random inputs. For more details on the parameterization of random inputs by a finite-dimensional random vector, we refer the reader to the appendix. The collision operator $Q(f)$ is given by 
\begin{equation}\label{Qf}
    Q(f)(v,z) = \int_{\R} \sigma(v,v_*,z) \left[ M(v) f(v_*,z) - M(v_*)f(v,z) \right] \,\rd v_*\,,
\end{equation}
where $M(v)$ is the normalized Maxwellian distribution
\begin{equation}\label{Mv}
    M(v) := \frac{1}{(2\pi)^{1/2}} \e^{-|v|^2}.
\end{equation}

In this paper, we consider the one-dimensional in space and velocity variables, with uncertain parameters arising from: 
\begin{itemize}
\item[(i)] the initial datum $ f^{0}(x,v,z)$;
\item[(ii)] the collision kernel $\sigma(v,v_*,z) \geq 0$.
\end{itemize}
In particular, if we assume the scattering kernel to be isotropic (independent of the velocity variable), i.e., $\sigma(v,v_*,z) = \sigma(z)$, the equation becomes 
\begin{multline}\label{LB}
     \partial_t f(t,x,v,z) + v \partial_x f(t,x,v,z) \\
 = \sigma(z) M(v) \left( \int_{\R} f(t,x,v_*,z) \,\rd v_*\right) -  \sigma(z) f(t,x,v,z). 
\end{multline}

\subsection{Moments system}
\label{subsec:Moments}

We take the moments of the linear Boltzmann equation against the Hermite polynomials of $v$ in the whole space, instead of the bounded domain as for RTE \cite{Huang1}.
Denoting the $k$-th order Hermite polynomial by $H_k = H_k(v)$ for $k\geq 0$, the $k$-th order moments can be defined as 
\begin{equation}\label{mk}
    m_k(t,x,z) := \int_{\R} f(t,x,v,z) H_k(v) \,\rd v, \quad k \geq 0.
\end{equation}

\begin{remark}
    Recall the recurrence relation of the Hermite polynomials:
\begin{equation}\label{recur}
        H_{k+1}(v) = \sqrt{\frac{2}{k+1}} v H_{k}(v) - \sqrt{\frac{k}{k+1}} H_{k-1}(v)
    \end{equation}
    with $H_{0}(v)=\pi^{-\frac{1}{4}}$ and $H_{1}(v)=\sqrt{2}\pi^{-\frac{1}{4}}v$. Furthermore, the orthogonal relation with respect to the weight function $M(v)$ holds:
    \begin{equation}\label{ortho}
        \int_{\R} H_m(v) H_n(v) M(v) \,\rd v = 0, \quad \text{for all} \quad m \neq n.
    \end{equation}
\end{remark}

Hence, multiplying both sides of \eqref{LB} by $H_k(v)$ and integrating over the whole velocity space $\mathbb{R}$ lead to 
\begin{equation}\label{Hf}
\begin{aligned}
    &\partial_t \int_{\R} H_k(v) f(t,x,v,z) \,\rd v + \partial_x \int_{\R} v H_k(v) f(t,x,v,z) \,\rd v \\[3pt]
     =& \sigma(z) \pi^{\frac{1}{4}} \int_{\R} H_0(v) H_k(v) M(v) \,\rd v \left( \int_{\R} f(t,x,v_*,z) \,\rd v_*\right)\\
     & \qquad - \sigma(z) \int_{\R} H_k(v) f(t,x,v,z) \,\rd v,
\end{aligned}
\end{equation}
which, by considering the definition of \eqref{mk}, involving the recurrence relation \eqref{recur} on the left-hand side and the orthogonal relation \eqref{ortho} on the right-hand side of \eqref{Hf} above, can be further simplified as 
\begin{multline}
   \partial_t m_k(t,x,z) + \partial_x \left[ \sqrt{\frac{k+1}{2}} m_{k+1}(t,x,z) + \sqrt{\frac{k}{2}} m_{k-1}(t,x,z) \right]\\
     = \delta_{0k}\sigma(z)m_0(t,x,z) - \sigma(z)m_k(t,x,z).
\end{multline}
Therefore, the moment system up to $m_N$ is presented as follows:
\begin{equation}\label{moments1}
\left\{
    \begin{aligned}
        \partial_t m_0  + \sqrt{\frac{1}{2}} \partial_x  m_{1}   & = 0, \\
        \partial_t m_1  +  \partial_x  m_{2}  + \sqrt{\frac{1}{2}} \partial_x m_{0}  & = - \sigma(z)m_1, \\
        \partial_t m_2  +  \sqrt{\frac{3}{2}} \partial_x m_{3}  + \partial_x m_{1}  & = - \sigma(z)m_2, \\
        \cdots &\\
         \cdots &\\
          \cdots &\\
        \partial_t m_N  +  \sqrt{\frac{N+1}{2}} \partial_x m_{N+1}  + \sqrt{\frac{N}{2}} \partial_x m_{N-1}  &= - \sigma(z)m_N.
    \end{aligned}
    \right.
\end{equation}

We can find that, in the last equation of \eqref{moments1} above, the evolution of $N$-th order moment $m_N$ depends on $m_{N+1}$, therefore, the moments system \eqref{moments1} is unclosed. In fact, there are many classical ways to close the system, where the $P_N$ model \cite{Chandrasekhar1944} is the most straightforward approach. The $P_N$ model utilizes the orthogonal polynomials in the velocity space and assumes $m_{N+1} = 0$ to close the model, such that the system \eqref{moments1} can be written in the following vector form:
by denoting $\bm{m} = (m_0, m_1,..., m_N)^T$,
\begin{equation}\label{M-PN}
    \partial_t \bm{m} + A \partial_x \bm{m} = S\bm{m}, 
\end{equation}
where the diagonal coefficient matrix $S \in \mathbb{R}^{(N+1)\times (N+1)} $ is
\begin{equation}
    S := \text{diag} (0, -\sigma(z), -\sigma(z), \cdots, -\sigma(z))\,,
\end{equation}
and the coefficient matrix $A \in \mathbb{R}^{(N+1)\times (N+1)}$ is
\begin{equation}\label{eq:matrix-A-PN}
   A :=  
   \begin{pmatrix}
        0 & \sqrt{\frac{1}{2}} & 0 & 0 & 0 & 0 & \cdots & 0\\
        
        \sqrt{\frac{1}{2}} & 0 & 1 & 0 & 0 & 0 & \cdots & 0\\
        
        0 & 1 & 0 & \sqrt{\frac{3}{2}} & 0 & 0 & \cdots & 0\\
        
        \vdots & \vdots & \vdots & \vdots &\ddots & \ddots &\ddots & \vdots\\

        \vdots & \vdots & \vdots & \vdots &\ddots & \ddots &\ddots & \vdots\\
        
        0 & 0 & \cdots & 0 & \sqrt{\frac{N-2}{2}} & 0 & \sqrt{\frac{N-1}{2}} & 0 \\
        
        0 & 0 & \cdots & 0 & 0 & \sqrt{\frac{N-1}{2}} & 0 & \sqrt{\frac{N}{2}} \\
        
        0 & 0 & \cdots & 0 & 0 & 0 & \sqrt{\frac{N}{2}} & 0\\
    \end{pmatrix}.
\end{equation}

%-------------------------------------------------
\section{Machine learning based moments closure model}
\label{sec:ml}

Recent advancements in ML techniques have led to notable progress in using ML frameworks to enhance moment closure models.
One of the standard ways for the moments closure is to seek the relation between the highest moment $m_{N+1}$ and the lower-order moments \cite{HMME2019}:

\begin{equation}\label{LM}
    m_{N+1} = \mathcal{N}(m_0, m_1, \cdots, m_N),
\end{equation}

where $\mathcal{N}: \mathbb{R}^{N+1} \mapsto \mathbb{R}$ is a neural network trained from data.
This is the so-called Learning the Moment (LM) approach. In \cite{HMME2019}, it served as the regression in supervised learning and a part of the end-to-end learning procedure. 

%------------------------

\subsection{Formulation and hyperbolic condition}
\label{subsec:hyperbolic}

In order to close the moment system \eqref{moments1} and circumvent the challenge in the LM framework, where the training process often converges to a local minimum, we will adopt the closure relation introduced in \cite{Huang1}. This approach assumes a linear relationship between the gradient of the highest moment, $\partial_{x} m_{N+1}$, and the gradients of lower-order moments, $\partial_{x} m_0, \cdots, \partial_{x} m_N$, as follows:
\begin{equation}\label{LG-deter}
     \partial_{x}m_{N+1} = \sum_{i=0}^{N} \mathcal{N}_i(m_0,m_1,...,m_N)\partial_{x}m_i.
\end{equation}
In this case, we can rewrite the moments system \eqref{moments1} in the following vector form:
\begin{equation}\label{moment-equation-LG}
    \partial_t \bm{m} + A \partial_x \bm{m} = S\bm{m}, 
\end{equation}
where the diagonal coefficient matrix $S \in \mathbb{R}^{(N+1)\times (N+1)} $ is
\begin{equation}
    S := \text{diag} (0, -\sigma(z), -\sigma(z), \cdots, -\sigma(z))\,,
\end{equation}
and the coefficient matrix $A \in \mathbb{R}^{(N+1)\times (N+1)}$ is
\begin{equation}\label{eq:matrix-A-LG}
   A :=  
   \begin{pmatrix}
        0 & \sqrt{\frac{1}{2}} & 0 & 0 & 0 & 0 & \cdots & 0\\
        
        \sqrt{\frac{1}{2}} & 0 & 1 & 0 & 0 & 0 & \cdots & 0\\
        
        0 & 1 & 0 & \sqrt{\frac{3}{2}} & 0 & 0 & \cdots & 0\\
        
        \vdots & \vdots & \vdots & \vdots &\ddots & \ddots &\ddots & \vdots\\

        \vdots & \vdots & \vdots & \vdots &\ddots & \ddots &\ddots & \vdots\\
        
        0 & 0 & \cdots & 0 & \sqrt{\frac{N-2}{2}} & 0 & \sqrt{\frac{N-1}{2}} & 0 \\
        
        0 & 0 & \cdots & 0 & 0 & \sqrt{\frac{N-1}{2}} & 0 & \sqrt{\frac{N}{2}} \\
        
        a_0 & a_1 & \cdots & a_{N-4} & a_{N-3} & a_{N-2} & a_{N-1} & a_{N}\\
    \end{pmatrix}
\end{equation}
with coefficients $a_i$ in the last row of $A$ and $\mathcal{N}_i$ satisfying the following relation:
\begin{equation}\label{coef-relation}
a_j = 
\begin{cases}
    \sqrt{\frac{N+1}{2}}\mathcal{N}_j, & j \neq N-1, \\[4pt]
    \sqrt{\frac{N+1}{2}}\mathcal{N}_j + \sqrt{\frac{N}{2}}, & j = N-1.
\end{cases}
\end{equation}

In general, the matrix $A$ is not real-diagonalizable, so the system is not necessarily hyperbolic. We are trying to find a condition that enforces $A$ to be hyperbolic, so that the system \eqref{moment-equation-LG} remains stable over time. To achieve this, we follow the technique introduced in \cite{Huang2}, i.e., we seek an SPD matrix $A_0$ such that $A_{0}A$ is symmetric. However, this matrix $A_0$ is usually hard to compute. Therefore, without loss of generality, we relax the assumption \eqref{LG-deter} by removing the first $k$ dependence that
\[ 
\partial_{x}m_{N+1} = \sum_{i=N-k}^{N} \mathcal{N}_i(m_0,m_1,...,m_N)\partial_{x}m_i,  
\]
where $k=2,3$ are typical choices to simplify the computation, and in either case, we assume $N \geq 3$ to avoid the trivial results. 
In what follows, we will take $k=2$ to illustrate the idea, whereas the same strategy can be extended directly to $k=3$. 

\begin{theorem}\label{theorem-symmetric}
    Consider the matrix $A \in \mathbb{R}^{(N+1)\times (N+1)}$ with $N \geq 3$ and $k=2$ in \eqref{eq:matrix-A-LG}, i.e.,
    \begin{equation*}
    A =  
   \begin{pmatrix}
        0 & \sqrt{\frac{1}{2}} & 0 & 0 & 0 & 0 & \cdots & 0\\
        
        \sqrt{\frac{1}{2}} & 0 & 1 & 0 & 0 & 0 & \cdots & 0\\
        
        0 & 1 & 0 & \sqrt{\frac{3}{2}} & 0 & 0 & \cdots & 0\\
        
        \vdots & \vdots & \vdots & \vdots &\ddots & \ddots &\ddots & \vdots\\

        \vdots & \vdots & \vdots & \vdots &\ddots & \ddots &\ddots & \vdots\\
        
        0 & 0 & \cdots & 0 & \sqrt{\frac{N-2}{2}} & 0 & \sqrt{\frac{N-1}{2}} & 0 \\
        
        0 & 0 & \cdots & 0 & 0 & \sqrt{\frac{N-1}{2}} & 0 & \sqrt{\frac{N}{2}} \\
        
        0 & 0 & \cdots & 0 & 0 & a_{N-2} & a_{N-1} & a_{N}\\
    \end{pmatrix}
    \end{equation*}
    where $a_j$ is defined as in \eqref{coef-relation}. If the constraints of the coefficients $a_j, j = N-2, N-1, N$  hold:
    \begin{equation}\label{a-inequality}
    \sqrt{\frac{N-1}{2}}a_{N-1}+a_Na_{N-2} - \sqrt{\frac{N}{2}}a_{N-2}^2 >0,
    \end{equation}
    or equivalently, for $\mathcal{N}_j, j = N-2, N-1, N$, 
    \begin{equation}\label{hyper-inequality}
    \frac{N+1}{2}\left(\mathcal{N}_{N-2}\mathcal{N}_{N} - \sqrt{\frac{N}{2}}\mathcal{N}_{N-2}^2 \right)+\sqrt{\frac{N-1}{2}} \left(\sqrt{\frac{N+1}{2}}\mathcal{N}_{N-1}+\sqrt{\frac{N}{2}}\right) > 0,
    \end{equation}
    then there exists an SPD matrix $A_0 = 
    \begin{pmatrix} 
    I & 0 \\
    0 & B
    \end{pmatrix}$ 
    such that $A_0A$ is symmetric, where $I \in \mathbb{R}^{(N-1)\times(N-1)}$ is an identity matrix, and $B = 
    \begin{pmatrix} 
    b_1 & b_2 \\
    b_2 & b_3
    \end{pmatrix}$ is an SPD matrix.
\end{theorem}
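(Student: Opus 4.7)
The plan is to enforce symmetry of $A_0 A$ entry-by-entry, reduce the resulting conditions to a $3\times 3$ linear system for $(b_1,b_2,b_3)$, and then show that SPD-ness of $B$ collapses to the scalar inequality \eqref{a-inequality}. First I would partition $A$ and $A_0$ into the conformal $(N-1)+2$ block form
\begin{equation*}
A = \begin{pmatrix} A_{11} & A_{12} \\ A_{21} & A_{22} \end{pmatrix}, \qquad A_0 = \begin{pmatrix} I & 0 \\ 0 & B \end{pmatrix},
\end{equation*}
so that $A_0 A = \begin{pmatrix} A_{11} & A_{12} \\ B A_{21} & B A_{22} \end{pmatrix}$. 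Symmetry of $A_0 A$ then decouples into three requirements: (i) $A_{11} = A_{11}^T$ (automatic, since $A_{11}$ is the standard Grad tridiagonal block); (ii) $A_{12} = (B A_{21})^T$; and (iii) $B A_{22}$ is symmetric. Because only the last row of $A$ is perturbed and, under the $k=2$ truncation, the perturbation touches only its last three entries, both $A_{12}$ and $A_{21}$ are extremely sparse: the only nonzero row of $A_{12}$ is row $N-2$ (carrying $\sqrt{(N-1)/2}$ in column $N-1$ and zero in column $N$), and the only nonzero column of $A_{21}$ is column $N-2$ (carrying $\sqrt{(N-1)/2}$ and $a_{N-2}$).

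Given this sparsity, condition (ii) collapses to two scalar equations and condition (iii) adds one more, yielding a $3\times 3$ linear system for $(b_1,b_2,b_3)$ whose coefficients involve $a_{N-2}, a_{N-1}, a_N$ together with $\sqrt{(N-1)/2}$ and $\sqrt{N/2}$. I would use the equation enforcing $(A_0 A)_{N-2, N} = (A_0 A)_{N, N-2}$ to express $b_2$ as a constant multiple of $b_3$, the equation $(A_0 A)_{N-2, N-1} = (A_0 A)_{N-1, N-2}$ to write $b_1$ in terms of $b_3$, and finally the off-diagonal symmetry $(A_0 A)_{N-1, N} = (A_0 A)_{N, N-1}$ to isolate $b_3$. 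The denominator of the resulting expression for $b_3$ is precisely the combination appearing in \eqref{a-inequality}, so positivity of $b_3$ becomes equivalent to that inequality; substituting \eqref{coef-relation} then reproduces the equivalent form \eqref{hyper-inequality}.

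Finally, to check that $B$ is SPD it suffices to verify the Sylvester conditions $b_1 > 0$ and $\det B = b_1 b_3 - b_2^2 > 0$. Inserting the explicit formulas for $b_1$ and $b_2$ in terms of $b_3$, the $b_3^2$ contributions in $b_1 b_3$ and $b_2^2$ cancel exactly and $b_1 b_3 - b_2^2$ simplifies to $b_3$; moreover $b_1$ takes the form $1 + (\mathrm{nonneg})\cdot b_3$, so it is automatically positive once $b_3 > 0$. Both SPD conditions therefore reduce to the single requirement $b_3 > 0$, which is ensured by the hypothesis. The main obstacle — and essentially the only place arithmetic slips can creep in — is the careful bookkeeping of which entries of $A$ lie in which block along with the algebraic simplifications in the $3\times 3$ elimination; once the block layout is pinned down, the remainder is routine linear algebra.
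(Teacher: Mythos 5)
Your proposal is correct and follows essentially the same route as the paper: enforce symmetry of $A_0A$ entrywise, reduce to the same $3\times 3$ linear system for $(b_1,b_2,b_3)$ (the paper writes it as $\bm{M}\bm{b}=\bm{c}$ and solves by Cramer's rule rather than by your successive elimination), and then apply Sylvester's criterion. Your endgame is in fact slightly cleaner than the paper's: the identities $b_2=-\tfrac{a_{N-2}}{\sqrt{(N-1)/2}}\,b_3$ and $b_1=1+\tfrac{2a_{N-2}^2}{N-1}\,b_3$ do give $b_1b_3-b_2^2=b_3$ exactly, so both Sylvester conditions collapse to $b_3>0$, whereas the paper argues from $b_1>0$ and $b_1b_3-b_2^2>0$ separately. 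One point to fix in the write-up: the denominator of $b_3$ is not literally the left-hand side of \eqref{a-inequality} but rather
\begin{equation*}
\sqrt{\tfrac{N-1}{2}}\Bigl(\sqrt{\tfrac{N-1}{2}}\,a_{N-1}+a_Na_{N-2}\Bigr)-\sqrt{\tfrac{N}{2}}\,a_{N-2}^2,
\end{equation*}
so positivity of $b_3$ is \emph{implied by} \eqref{a-inequality} (since $\sqrt{(N-1)/2}\ge 1$ for $N\ge 3$ and $a_{N-2}^2\ge 0$, the hypothesis forces $\sqrt{(N-1)/2}\,a_{N-1}+a_Na_{N-2}>0$ and hence the scaled combination exceeds $\sqrt{N/2}\,a_{N-2}^2$ as well), but is not \emph{equivalent} to it when $N>3$. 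This suffices for the stated implication of the theorem, and the paper's own proof is equally loose on exactly this step, but you should not claim equivalence.
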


\begin{proof}
Since $A_0 A$ is required to be a real symmetric matrix, we perform the matrix multiplication and impose the condition that the corresponding entries be equal, thereby ensuring that $A_0 A$ is symmetric. This leads to the following equations:
\begin{equation}\label{bmatrix}
\left\{
    \begin{aligned}
        \sqrt{\frac{N-1}{2}}b_1 +a_{N-2}b_2= & \, \sqrt{\frac{N-1}{2}}, \\[3pt]
        \sqrt{\frac{N-1}{2}}b_2 + a_{N-2}b_3= &\, 0, \\[3pt]
        \sqrt{\frac{N}{2}}b_1 + a_Nb_2=& \, a_{N-1}b_3,
    \end{aligned}
\right.
\end{equation}
which can be further written in matrix form $\bm{M}\bm{b}=\bm{c}$ with
\begin{equation*}
\bm{M}=\begin{pmatrix} 
\sqrt{\frac{N-1}{2}} & a_{N-2} & 0  \\
0 & \sqrt{\frac{N-1}{2}} & a_{N-2} \\
\sqrt{\frac{N}{2}} & a_N & -a_{N-1}
\end{pmatrix}, \quad 
\bm{b}=
\begin{pmatrix} 
b_1 \\[8pt]
b_2 \\[8pt]
b_3 
\end{pmatrix}, \quad
\bm{c}=
\begin{pmatrix}
\sqrt{\frac{N-1}{2}} \\[8pt]
0 \\[8pt]
0 
\end{pmatrix}. 
\end{equation*}
Then, using the Cramer's rule to solve \eqref{bmatrix} above, we find
\begin{equation}\label{b-values}
\left\{
    \begin{aligned}
        b_1= &\frac{\sqrt{\frac{N-1}{2}}(-\sqrt{\frac{N-1}{2}}a_{N-1}-a_Na_{N-2})}{\sqrt{\frac{N-1}{2}}(-\sqrt{\frac{N-1}{2}}a_{N-1}-a_Na_{N-2})+\sqrt{\frac{N}{2}}a_{N-2}^2}, \\
        b_2= &\frac{\sqrt{\frac{N-1}{2}}\sqrt{\frac{N}{2}}a_{N-2}}{\sqrt{\frac{N-1}{2}}(-\sqrt{\frac{N-1}{2}}a_{N-1}-a_Na_{N-2})+\sqrt{\frac{N}{2}}a_{N-2}^2}, \\
        b_3=&\frac{-\sqrt{\frac{N}{2}}\sqrt{\frac{N-1}{2}}}{\sqrt{\frac{N-1}{2}}(-\sqrt{\frac{N-1}{2}}a_{N-1}-a_Na_{N-2})+\sqrt{\frac{N}{2}}a_{N-2}^2}.
    \end{aligned}
\right.
\end{equation}

Considering the fact that $B$ is an SPD matrix, by Sylvester's criterion, it has to satisfy the following inequalities: 
\begin{equation}\label{b-inequalities}
\left\{
    \begin{aligned}
        &b_1 > 0, \\
        &b_1b_3 - b_{2}^2 > 0,
    \end{aligned}
    \right.
\end{equation}
from which, it is clear that $b_3 >0$, i.e.,
\begin{equation*}
    \sqrt{\frac{N-1}{2}} \left(-\sqrt{\frac{N-1}{2}}a_{N-1}-a_Na_{N-2}\right) + \sqrt{\frac{N}{2}}a_{N-2}^2 < 0.
\end{equation*}

Since $b_1 >0$, we must have $\sqrt{\frac{N-1}{2}}a_{N-1}+a_Na_{N-2}>0$. On the other hand, to achieve $b_1b_3 - b_{2}^2>0$, we have $\sqrt{\frac{N-1}{2}}a_{N-1}+a_Na_{N-2} - \sqrt{\frac{N}{2}}a_{N-2}^2 >0$. Hence, considering $N \geq 3$, all these inequalities can be summarized into a single constraint \eqref{a-inequality}.

Finally, by substituting \eqref{coef-relation} into \eqref{a-inequality}, we can obtain the constraint \eqref{hyper-inequality} that should be satisfied by $\mathcal{N}_j$, $j = N-2, N-1, N$.
\end{proof}

\begin{remark}
    If $N=1$, we have no other option but to set $k=1$. In this case, the hyperbolic constraint is given by $a_0 >0$, or equivalently, $\mathcal{N}_{0} > -\sqrt{\frac{N}{N+1}}$. 
\end{remark}

%%%%%%%%%%%%%%%%%%%%%%%%%%%%%%%%%%%%%%%%%%%%%%%%%%%%%

\subsection{Stochastic Galerkin (SG) method for random variable}
\label{subsec:SG}
To numerically discretize the random vector $z$ in our moment system \eqref{moments1}, we apply the stochastic Galerkin (SG) method to eliminate the randomness, thereby transforming the system into an equivalent form containing only deterministic coefficients. To simplify our arguments, we assume $z \in I_z \subset \mathbb{R}^{d_z}$ with $d_z=1$ throughout the rest of this paper, which can be generalized to high dimension without an essential difference.

We define the space 
$$\mathbb{P}^K := \text{Span} \left\{ \phi_{i}(z) \, \Big| \, 0 \leq i \leq K \right\}$$
equipped with the inner product with respect to the probability density function $\pi(z)$ in $z$:
\begin{equation*}
	\langle f(t,x,v,\cdot), \, g(t,x,v,\cdot) \rangle_{I_{z}} = \int_{I_{z}}  f(t,x,v,z) \, \overline{g(t,x,v,z)} \, \pi(z) \,\rd z,
\end{equation*}
where $\{ \phi_{i}(z)\}_{i = 0}^{K}$ is an orthonormal gPC basis function, i.e.,
\begin{equation}\label{orthonormal-relation}
    \int_{I_z} \phi_{i}(z) \phi_{j}(z) \pi(z) \,\rd z = \delta_{ij}, \quad 0 \leq i, j \leq K.
\end{equation}
Then, the typical SG method is based on seeking an approximation of $f(z)$ in $\mathbb{P}^{K}$ such that
\begin{equation}\label{Galerkin-expansion}
    f(z) \approx \sum_{i=0}^{K} f^{i} \phi_{i}(z) \quad \text{with} \quad f^{i} = \int_{I_z} f(z) \phi_{i}(z) \pi(z) \,\rd z.
\end{equation}

Now, in the case of \eqref{moments1}, we can expand $m_{k}(t,x,z)$ as follows: 
\begin{equation}\label{moment-expansion}
    m_{k}(t,x,z) \approx \sum_{i=0}^{K} m_{k}^{i}(t,x) \phi_{i}(z)
\end{equation}
with
\begin{equation}\label{mki}
    m_{k}^{i}(t,x)= \int_{I_z} m_{k}(t,x,z) \phi_{i}(z) \pi(z) \,\rd z.
\end{equation}
More precisely, considering the $k$-th equation in \eqref{moments1},
\begin{multline}\label{kth-equation}
    \partial_t m_k(t,x,z) +  \sqrt{\frac{k+1}{2}} \partial_x m_{k+1}(t,x,z) + \sqrt{\frac{k}{2}} \partial_x m_{k-1}(t,x,z)\\  = - \sigma(z)m_k(t,x,z),
\end{multline}
and following \eqref{moment-expansion}, we can project both sides of \eqref{kth-equation} into $\mathbb{P}^{K}$ and obtain
\begin{multline}\label{kth-equation-Galerkin}
    \partial_t m_{k}^{i}(t,x) +  \sqrt{\frac{k+1}{2}} \partial_x m_{k+1}^{i}(t,x) + \sqrt{\frac{k}{2}} \partial_x m_{k-1}^{i}(t,x) \\= \sum_{j=0}^{K} S_{ij} m_{k}^{j}(t,x),
\end{multline}
for $k\geq 1$ and $0 \leq i \leq K$, where the matrix $S=(S_{ij}) \in \mathbb{R}^{(K+1) \times (K+1)}$ includes pre-computed weights concerning the random collision kernel $\sigma(z)$ as follows:
\begin{equation*}\label{Sij}
    S_{ij} =\int_{I_z} - \sigma(z) \phi_{j}(z) \phi_{i}(z) \pi(z) \,\rd z.
\end{equation*}

Furthermore, by denoting $\bm{m}_{k}(t,x)=(m_{k}^{0}(t,x), m_{k}^{1}(t,x),...,m_{k}^{K}(t,x))^{T}$, we can rewrite \eqref{kth-equation-Galerkin} in the following vector form,
\begin{equation}\label{kth-equation-vector-form}
    \partial_t \bm{m}_{k} + \sqrt{\frac{k+1}{2}} \partial_x \bm{m}_{k+1} + \sqrt{\frac{k}{2}} \partial_x \bm{m}_{k-1}= \bm{S}\bm{m}_{k},
\end{equation}
for $k \geq 1$, and 
\begin{equation*}
    \partial_t \bm{m}_{0} + \sqrt{\frac{1}{2}} \partial_x \bm{m}_{1} = \bm{0},
\end{equation*}
for $k=0$.

Again, we need to propose a closure relation before we can solve the system. We follow the same dependence as in \eqref{LG-deter} and assume: 
\begin{equation}\label{LG-random}
     \partial_{x}\bm{m}_{N+1} = \sum_{i=0}^{N} \bm{\mathcal{N}}_i(\bm{m}_0,\bm{m}_1,...,\bm{m}_N) \partial_{x}\bm{m}_i.
\end{equation}
Then, by inserting \eqref{LG-random} into \eqref{kth-equation-vector-form} and denoting $\bm{\mathfrak{m}}=(\bm{m}_{0},\bm{m}_{1},...,\bm{m}_{N})^{T}$, the moment system via SG method can be written as: 
\begin{equation}\label{moments-equation-random}
    \partial_t \bm{\mathfrak{m}} + \bm{A} \partial_x \bm{\mathfrak{m}} = \bm{S}\bm{\mathfrak{m}},
\end{equation}
where
\begin{equation*}
\bm{A} = 
  \begingroup % keep the change local
\setlength\arraycolsep{-2.5pt}
 \begin{pmatrix}
        0 & \sqrt{\frac{1}{2}} \bm{I}_{K+1} & 0 & 0 & 0 & 0 & \cdots & 0\\
        
        \sqrt{\frac{1}{2}}\bm{I}_{K+1} & 0 & \bm{I}_{K+1} & 0 & 0 & 0 & \cdots & 0\\
        
        0 & \bm{I}_{K+1} & 0 & \sqrt{\frac{3}{2}}\bm{I}_{K+1} & 0 & 0 & \cdots & 0\\
        
        \vdots & \vdots & \vdots & \vdots &\ddots & \ddots &\ddots & \vdots\\

        \vdots & \vdots & \vdots & \vdots &\ddots & \ddots &\ddots & \vdots\\
        
        0 & 0 & \cdots & 0 & \sqrt{\frac{N-2}{2}}\bm{I}_{K+1} & 0 & \sqrt{\frac{N-1}{2}}\bm{I}_{K+1} & 0 \\
        
        0 & 0 & \cdots & 0 & 0 & \sqrt{\frac{N-1}{2}}\bm{I}_{K+1} & 0 & \sqrt{\frac{N}{2}}\bm{I}_{K+1} \\
        
        \bm{a_0} & \bm{a_1} & \cdots & \bm{a_{N-4}} & \bm{a_{N-3}} & \bm{a_{N-2}} & \bm{a_{N-1}} & \bm{a_{N}}
    \end{pmatrix}
    \endgroup
\end{equation*}
with
\begin{equation*}\label{coef-relation_sto}
\bm{a}_j = 
\begin{cases}
    \sqrt{\frac{N+1}{2}}\bm{\mathcal{N}}_j, & j \neq N-1, \\[4pt]
    \sqrt{\frac{N+1}{2}}\bm{\mathcal{N}}_j + \sqrt{\frac{N}{2}}, & j = N-1.
\end{cases}
\end{equation*}
and
\begin{equation*}
    \bm{S} = \text{diag} ( 0, \, S, \, S,  \, \cdots, \, S).
\end{equation*}

In fact, when applying the SG method, we are often interested in the moments' expectation $\mathbb{E}(m_k)$ and standard deviation $s(m_k)$, which are closely related to the coefficients in \eqref{moment-expansion}. Without loss of generality, we assume $\phi_{0}(z)=1$ such that for each moment $m_k$, we have,
\begin{equation}\label{moment-expectation}
    \begin{aligned}
         \mathbb{E}(m_k) & \approx ~\mathbb{E} \left(\sum_{i=0}^{K} m_{k}^{i}\phi_{i}\right) \\
         =& \int_{I_z} \sum_{i=0}^{K} m_{k}^{i}(t,x)\phi_{i}(z) \pi(z) \, \rd z\\[4pt]
        & = \sum_{i=0}^{K}m_{k}^{i}(t,x) \int_{I_z} \phi_{0}(z)\phi_{i}(z) \pi(z) \, \rd z
        = m_{k}^{0}(t,x),
    \end{aligned}
\end{equation}
where we apply the orthonormality of $\{ \phi_{i}(z) \}$ in the last equality above. 

For the standard deviation, we have,
\begin{equation}\label{moment-std}
\begin{aligned}
        s(m_k) 
        \approx  \sqrt{\mathbb{E}((\sum_{i=0}^{K} m_{k}^{i}\phi_{i})^2) -(m_{k}^{0})^{2}}
         =\sqrt{\sum_{i=1}^{K} (m_{k}^{i})^2}. 
 \end{aligned}
\end{equation}

%-------------------------------------------------
\section{Training and methodology}
\label{sec:training}

In this section, we present the details about learning $\partial_{x}m_{N+1}$ by the lower orders of moments, and using the WENO scheme to solve the system after $\partial_{x}m_{N+1}$ is properly approximated. We will deal with both the deterministic case and the corresponding UQ problem. Numerical results of both cases will be presented in the next Section \ref{sec:numerical}.

%---------------------------------------
\subsection{Data preparation}
\label{subsec:data prep}

One key ingredient of our methodology is to approximate the highest moment using the lower orders of moments. To achieve this goal, we need to train a neural network $\mathcal{N}=(\mathcal{N}_0,\cdots,\mathcal{N}_N)$: $\mathbb{R}^{N+1} \rightarrow \mathbb{R}^{N+1}$ as in \eqref{LG-deter} for the deterministic problem or a network $\bm{\mathcal{N}}=(\bm{\mathcal{N}}_0,\cdots,\bm{\mathcal{N}}_N): \mathbb{R}^{(K+1) \times (N+1)} \rightarrow \mathbb{R}^{(K+1) \times (N+1)}$ as in \eqref{LG-random} for the stochastic case. The first step of our training process is to prepare training data to fit in our models. We will use synthetic data, ie: reference solutions for the moments obtained from classical numerical algorithms, to serve as the input and labels of our networks. 

In the case of the deterministic model \eqref{LG-deter}-\eqref{moment-equation-LG}, we apply the parity-based method given by Jin and Pareschi in \cite{JP2000} to solve the deterministic counterpart of \eqref{semi-Boltzmann} (no $z$) and obtain the reference solution $f(t,x,v)$. For simplicity, we consider the one dimension in space $x$ and velocity $v$ for illustration, where we set the computational domain of $x$ to be $[0,1]$ with grid points $N_{x}=100$, and apply $N_v = 8$ for velocity discretization. Following the CFL condition, the time step size is chosen as $\Delta t =0.1\Delta x$ with the final time $t=0.5$. 
Once $f(t,x,v)$ is obtained, we compute the $k$-th moment $m_{k}(t,x)$ by integrating $f$ against the corresponding Hermite polynomial $H_{k}$, as introduced in \eqref{mk}, where the Gauss-Hermite quadrature rule with $N_v = 8$ is used for integral evaluation. 

In the case of the moment system with uncertainty \eqref{LG-random}-\eqref{moments-equation-random}, we need to compute reference solutions $m_{k}^{i}(t,x)$, which is the $i$-th Galerkin coefficient of the $k$-th moment as defined in \eqref{moment-expansion}. Based on the stochastic collocation (SC) method \cite{Xiu}, we can obtain the coefficients $m_{k}^{i}(t,x)$ as in \eqref{mki}. To this end, the integral in \eqref{mki} is evaluated by
\begin{equation}\label{SC-method}
    \int_{I_z} m_{k}(t,x,z) \phi_{i}(z) \pi(z) \, \rd z \approx \sum_{j=1}^{M} m_{k}(t,x,z_j) \phi_{i}(z_j) \pi(z_j) w_j, 
\end{equation}
where $(z_j,w_j)$ are the collocation points and corresponding weights with $M$-quadrature nodes, and $m_{k}(t,x,z_j)$ are obtained similarly as in the deterministic case at each $z_j$.

%--------------------------------------------------
\subsection{Training}
\label{subsec:traning}

\begin{figure}[!htbp]
    \center
    \includegraphics[width=1\textwidth]{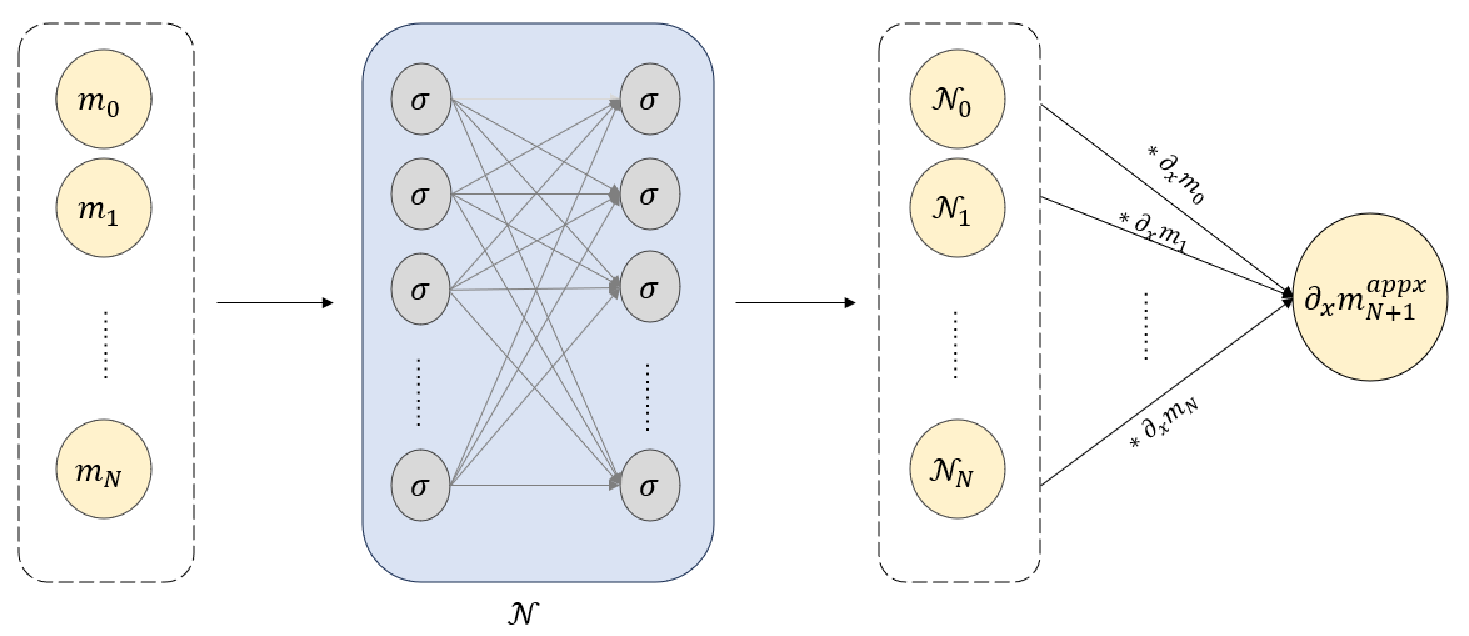}
    \caption{Architecture of our neural network.}
    \label{standard-DNN}
\end{figure}
In this subsection, we discuss the details of the architecture and the training process of the neural networks ($\mathcal{N}$ or $\bm{\mathcal{N}}$) mentioned in the subsection above. The architecture (Fig.~\ref{standard-DNN}) we choose is a standard fully connected neural network, where the input consists of lower moments (or their Galerkin coefficients in the UQ setting). This network is designed with 5 hidden layers, each containing 256 nodes, and employs the ReLU activation function. The output dimension matches that of the input. Fig.~\ref{standard-DNN} provides a graphical representation of this architecture. If hyperbolic condition is considered, we follow the same construction in \cite{Huang2} to modify the output layer to incorporate hyperbolicity into our model.

To train the neural networks, we apply the Adam optimizer with the learning rate $10^{-3}$ initially. The total number of training epochs is  1000 and the learning rate is set to decrease to $0.35$ every 100 epochs. We let the batch size be 1024. 
The input is normalized with zero mean and unit variance. These training hyperparameters are used in both deterministic and stochastic tests, and the only difference between the networks $\mathcal{N}$ and $\bm{\mathcal{N}}$ is the size of the input and output. We use 90\% of the data to train the networks and the rest of the data for validation. The hyperparameters and the activation function are tuned to minimize the loss function, which we describe below. 

In the last subsection above, we have discussed how to obtain the reference solution for $m_k, \partial_{x} m_k$ in the deterministic problem and $m_{k}^{i}, \partial_{x} m_{k}^{i}$ in the stochastic problem, for which we denote ``true" in the superscript as follows: 
\begin{equation}\label{moment-notations}
    \begin{split}
        \bm{m}_{det}^{true}=& (m_{0}^{true},\cdots,m_{N}^{true}),\\[3pt]
        \bm{\mathfrak{m}}_{sto}^{true}=& (\bm{m}_{0}^{true},\cdots,\bm{m}_{N}^{true}) \ \text{with} \ \bm{m}_{k}^{true} = (m_{k}^{0,true}, \cdots, m_{k}^{K,true}), \\[5pt]
        \partial_{x} \bm{m}_{det}^{true}=& (\partial_{x} m_{0}^{true},\cdots,\partial_{x} m_{N}^{true}),\\[3pt]
        \partial_{x} \bm{\mathfrak{m}}_{sto}^{true}=&(\partial_{x} \bm{m}_{0}^{true},\cdots,\partial_{x} \bm{m}_{N}^{true}) \ \text{with} \ \partial_{x} \bm{m}_{k}^{true} =(\partial_{x} m_{k}^{0,true},\cdots,\partial_{x} m_{k}^{K,true}).
    \end{split}
\end{equation}

Then, in the architecture shown in Fig.~\ref{standard-DNN}, the $\partial_{x} m_{N+1}^{appx}$ stand for the approximation from the neural network $\mathcal{N}$ in the deterministic case: following \eqref{LG-deter}, 
\begin{equation}\label{appx-standard}
    \partial_{x} m_{N+1}^{appx}(x_j,t_n) = \langle\mathcal{N}(\bm{m}_{det}^{true}(x_j,t_n)), \partial_{x} \bm{m}_{det}^{true}(x_j,t_n)\rangle, 
\end{equation}
where $\langle \cdot \rangle$ is the inner product in the standard Euclidean space. The approximation in the stochastic case is denoted by $\partial_{x} \bm{m}_{N+1}^{appx}(x_j,t_n)$ in a similar manner.

Now, we are in a position to introduce the loss functions for our neural networks in both the deterministic and the stochastic settings: 
\begin{equation}\label{loss-det}
    \mathcal{L}_{det} := \frac{1}{N_{data}} \sum_{j,n} |\partial_{x} m_{N+1}^{true}(x_j,t_n) - \partial_{x} m_{N+1}^{appx}(x_j,t_n)|^{2}, 
\end{equation}
\begin{equation}\label{loss-sto}
    \mathcal{L}_{sto} := \frac{1}{N_{data}} \sum_{j,n} \|\partial_{x} \bm{m}_{N+1}^{true}(x_j,t_n) - \partial_{x} \bm{m}_{N+1}^{appx}(x_j,t_n)\|_2^{2}, 
\end{equation}
%Once the training processes of the networks are completed, 
and we will measure the accuracy of our moment closure models by evaluating the relative $L^2$ errors between the approximated solutions by neural networks and reference solutions by solving the kinetic equation as follows:
\begin{equation}\label{relative-L2-det}
    E_{2,det} := \sqrt{\frac{\sum_{j,n} |\partial_{x} m_{N+1}^{true}(x_j,t_n) - \partial_{x} m_{N+1}^{appx}(x_j,t_n)|^{2}}{\sum_{j,n} |\partial_{x} m_{N+1}^{true}(x_j,t_n)|^{2}}}, 
\end{equation}
\begin{equation}\label{relative-L2-sto}
    E_{2,sto} = \sqrt{\frac{\sum_{j,n} \|\partial_{x} \bm{m}_{N+1}^{true}(x_j,t_n) - \partial_{x} \bm{m}_{N+1}^{appx}(x_j,t_n)\|_2^{2}}{\sum_{j,n} \|\partial_{x} \bm{m}_{N+1}^{true}(x_j,t_n)\|_2^{2}}}. 
\end{equation}

In Fig.~\ref{relative_l2_various_sigma_N3_N5}, we present the relative $L^2$ errors with respect to epochs by using our architecture to train $\partial_{x} m_{N+1}$ in the deterministic problem. We compare the performances when $N=3,5$ with $\sigma =2$ and $\sigma=10$. We observe that for both values of $N$ and $\sigma$, the relative training errors saturated at about $0.01 \sim 0.04$, with a comparably fast converging rate(around 300 epochs). 

\begin{figure}[!htbp]
\center
    \includegraphics[width=1\textwidth]{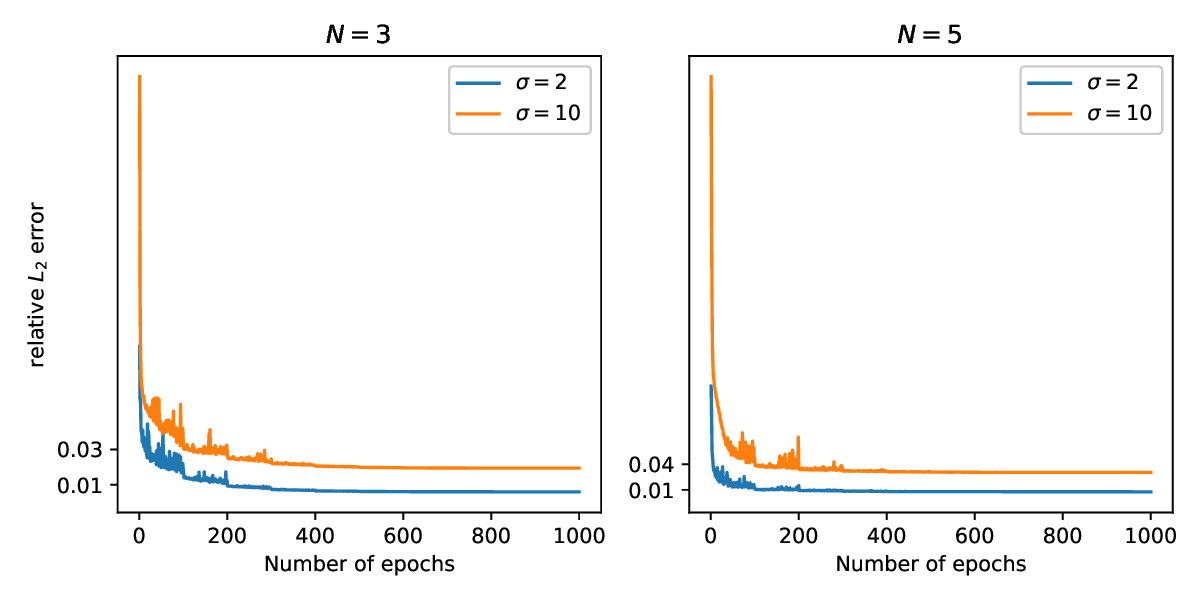}
    \caption{Left: relative $L^2$ errors of \eqref{relative-L2-det} for $N=3$ with $\sigma=2,10$. Right: relative $L^2$ errors of \eqref{relative-L2-det} for $N=5$ with $\sigma=2,10$ }
   \label{relative_l2_various_sigma_N3_N5}
\end{figure}

%-------------------------------------
\subsection{WENO Scheme}

Once $\partial_{x} m_{N+1}$ is properly learned using the lower-order moments, we need to solve the systems \eqref{moment-equation-LG} and \eqref{moments-equation-random} using some classical numerical schemes. The scheme we choose is the fifth-order finite difference WENO scheme with a Lax-Friedrichs flux-splitting for spatial discretization \cite{jiang1996efficient}. We take the grid number in space to be $N_x=100$. For the time discretization, we apply the third-order strong-stability-preserving Runge-Kutta (RK) scheme \cite{shu1988efficient} with CFL condition  $\Delta t= 0.1\Delta x$. The penalty constant in the Lax-Friedrichs numerical flux is chosen to be $\alpha_{LF}=5$.

%-------------------------------------------------
\section{Numerical results}
\label{sec:numerical}

\subsection{\textbf{Test I: Deterministic Problem}}

In this section, we present some results of our numerical experiments. To simplify the notation, we use the terms $P_N$, LM, LG, LG-hyper for moment closure methods corresponding to $m_{N+1}=0$, \eqref{LM}, \eqref{LG-deter} and \eqref{LG-deter} with hyperbolic constraints(discussed in Subsection 3.1), respectively. We use the term "real" to stand for reference solutions obtained by parity-based method(deterministic case) and stochastic collocation method(stochastic case), as described in subsection 4.1. 

We first examine the deterministic case, where \eqref{semi-Boltzmann} does not depend on the random variable $z$. We set the initial conditions as follows:
    \begin{equation}\label{initial-condition}
        f_{0}(x,v)=\frac{\e^{-v^2}}{\sqrt{\pi}}(1+a_0\sin(2\pi x)).
    \end{equation}
    
\noindent where $a_0 \in [0,1]$ is a constant. We randomly generate 10 values for $a_0$(assuming a uniform distribution). For each initial condition, we solve for reference solutions of the moments as described in \ref{subsec:data prep}. We then use all these reference moments up to time $t=0.4$ as our training data. Once the network is properly trained, we test the performance of our model at time $t=0.5$ with a new initial condition given by $a_0=0.9$. With this setup, we can analyze the generalizability of our model across different initial conditions and various time spots. 

We compare the results with two constant choices for the collision frequency $\sigma$: $\sigma=2$ and $\sigma=10$, and three different number of moments $N$, including $N=1$, $N=3$ and $N=5$. For $\sigma = 10$, in Fig.~\ref{N1,sigma10}, \ref{N3,sigma10} and \ref{N5,sigma10}, we show the numerical solutions of $m_0$ and $m_1$ at $t=0.5$ for $N=1,3,5$, respectively. It can be seen that all closure models can achieve reasonably good approximation results with this large collision frequency. This observation can be partly explained by the following arguments. When $\sigma$ is a constant in \eqref{Qf}, we can view it as $\sigma = \frac{1}{\varepsilon}$, where $\varepsilon$ is the dimensionless Knudsen number characterizing the ratio of particle mean free path over the domain size. When $\sigma$ is large, $\varepsilon$ is small and the model equation converges to its macroscopic limit, where a closed moment system can be derived. Hence, we can expect our moment system to be properly closed with large values for $\sigma$. 

\begin{figure}[!htbp]
    \includegraphics[width=1\textwidth]{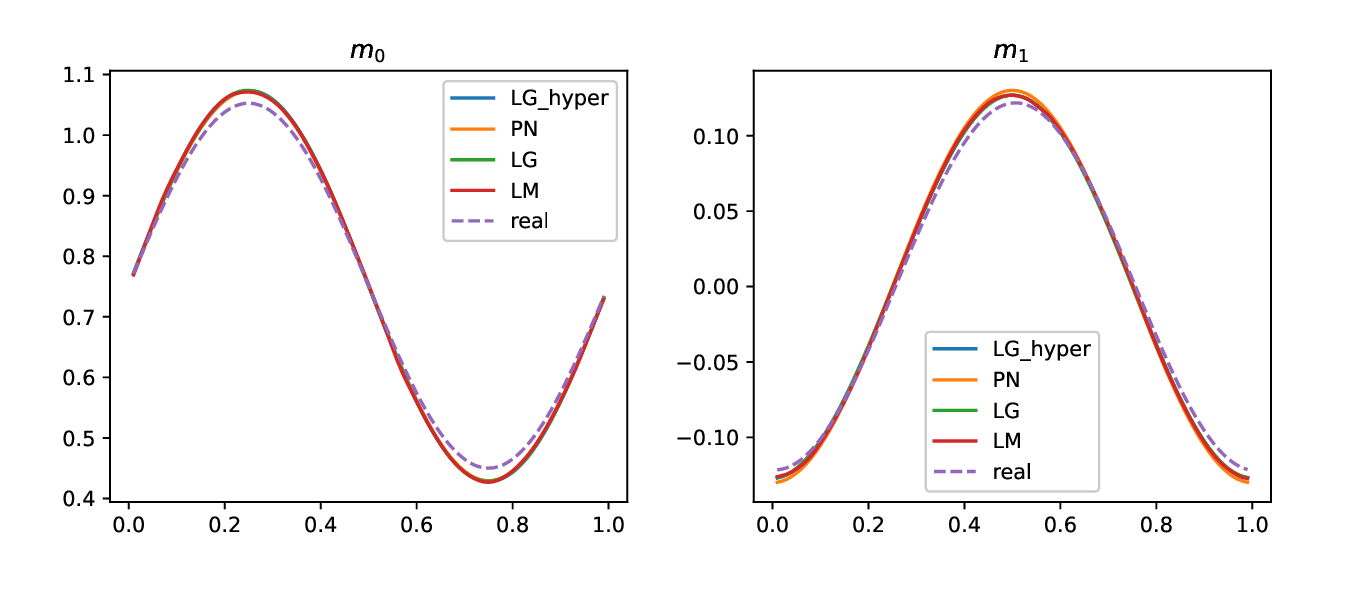}
    \caption{Comparison between the benchmark results by solving kinetic equation (``real") and predicted results using $P_N$ model (``PN"), LM model (``LM"), LG model (``LG") and LG model involving hyperbolicity (``LG$\_$hyper") with $N=1$, $\sigma=10$ at $t=0.5$.}
    \label{N1,sigma10}
\end{figure}
\begin{figure}[!htbp]
    \includegraphics[width=1\textwidth]{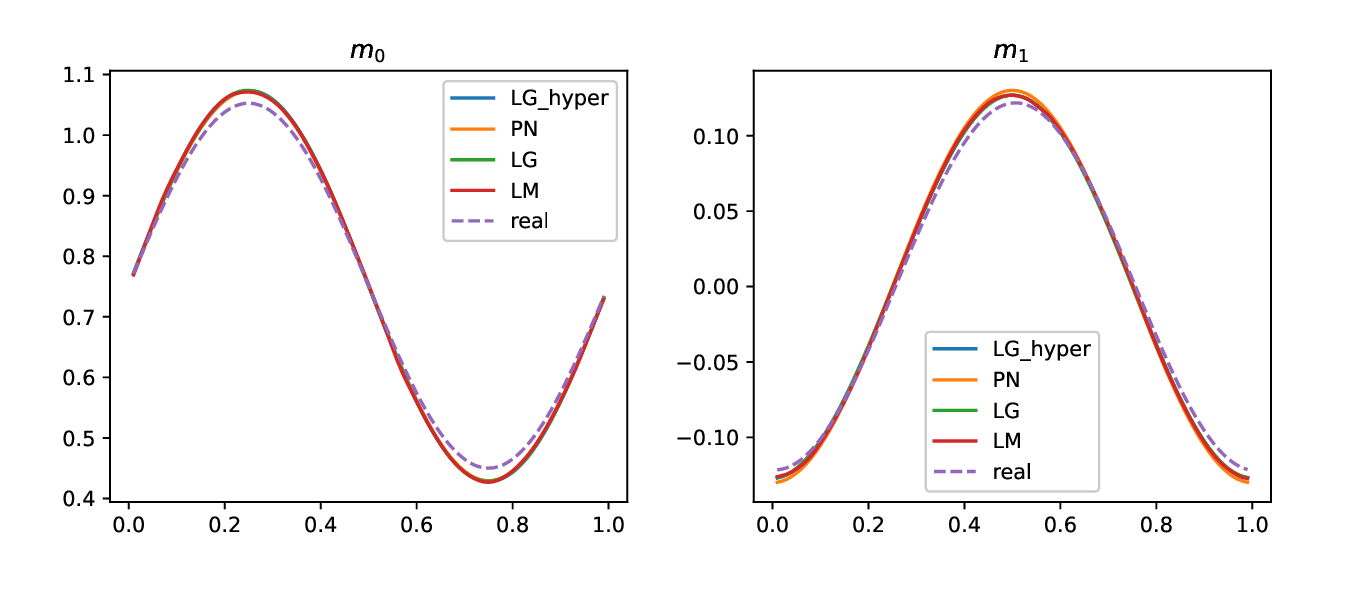}
    \caption{Comparison between the benchmark results by solving kinetic equation (``real") and predicted results using $P_N$ model (``PN"), LM model (``LM"), LG model (``LG") and LG model involving hyperbolicity (``LG$\_$hyper") with $N=3$, $\sigma=10$ at $t=0.5$.}
    \label{N3,sigma10}
\end{figure}
\begin{figure}[!htbp]
    \includegraphics[width=1\textwidth]{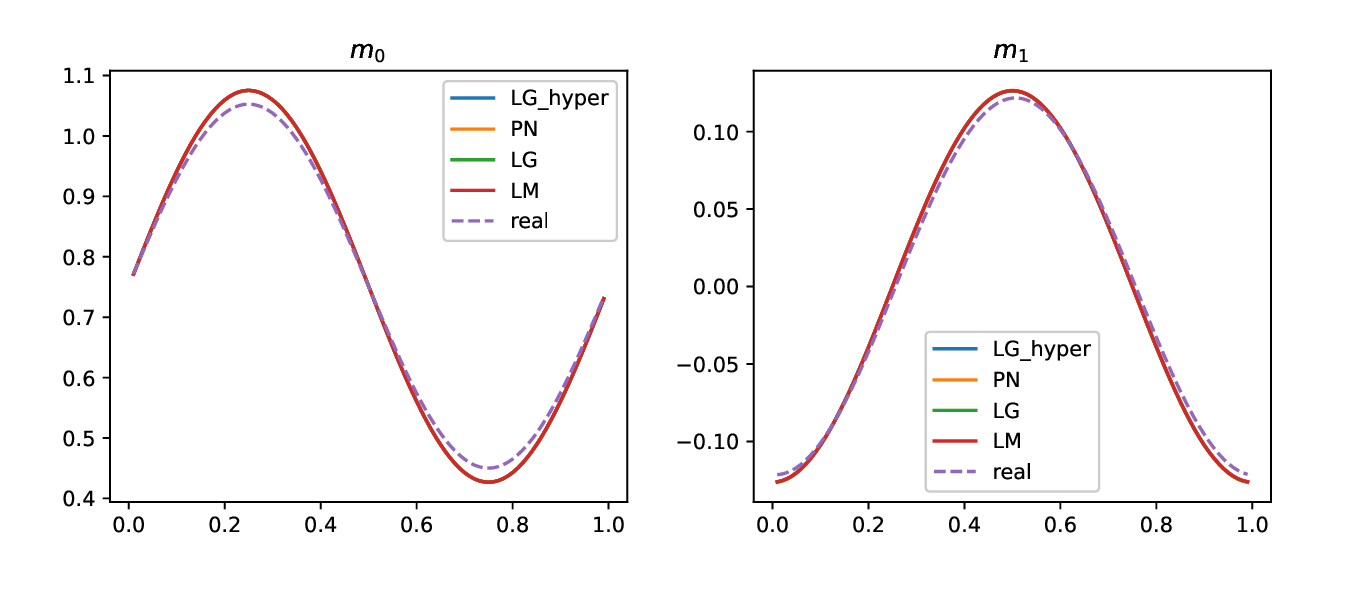}
    \caption{Comparison between the benchmark results by solving kinetic equation (``real") and predicted results using $P_N$ model (``PN"), LM model (``LM"), LG model (``LG") and LG model involving hyperbolicity (``LG$\_$hyper") with $N=5$, $\sigma=10$ at $t=0.5$.}
    \label{N5,sigma10}
\end{figure}

On the other hand, when $\sigma$ is small($\varepsilon$ is large), the model lies in a kinetic regime, where our knowledge about moment closure is quite limited. In this case, we can not expect to close the moment system properly using simple methods like $P_N$, especially when the number of moments we use is small. This assertion can be verified in Fig.~\ref{N1,sigma2}, \ref{N3,sigma2} and \ref{N5,sigma2}, where we show the numerical profile of $m_0$ and $m_1$ for smaller $\sigma =2$ with $N=1,3,5$ respectively. 
    
When $N=5$ in Fig.~\ref{N5,sigma2}, there is no clear distinction between the prediction by different methods from the reference solution. When $N=3$ in Fig.~\ref{N3,sigma2}, the errors for the $P_N$ and LM model start to blow up, while those for the LG model are visible but insignificant compared to the other models, and the LG with hyperbolicity remain to be accurate. When $N=1$ in Fig.~\ref{N1,sigma2}, even the solutions obtained by the LG model become oscillatory starting at $t=0.4$. However, when the hyperbolic condition is added during the training process, the behavior of the solutions gets regulated and the model achieves a reasonable approximation to the benchmark solution obtained from the kinetic equation. 
    \begin{figure}[!htbp]
        \includegraphics[width=1\textwidth]{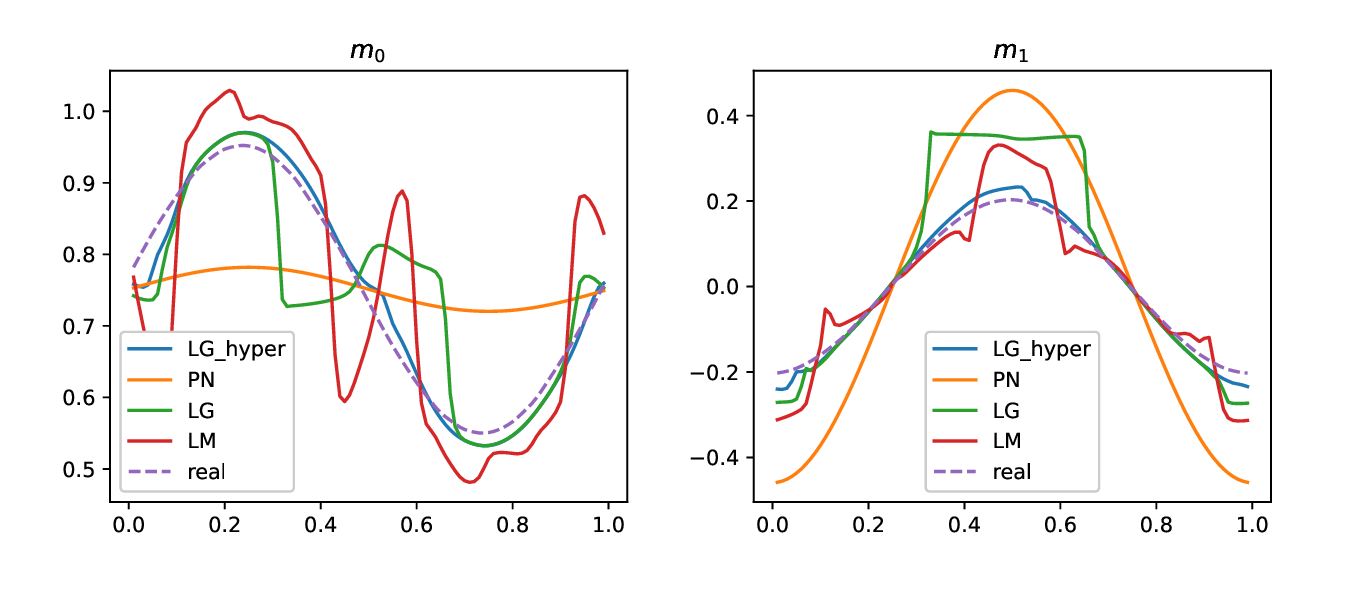}
        \caption{Comparison between the benchmark results by solving kinetic equation (``real") and predicted results using $P_N$ model (``PN"), LM model (``LM"), LG model (``LG") and LG model involving hyperbolicity (``LG$\_$hyper") with $N=1$, $\sigma=2$ at $t=0.4$.} 
        \label{N1,sigma2}
    \end{figure}
    \begin{figure}[!htbp]
        \includegraphics[width=1\textwidth]{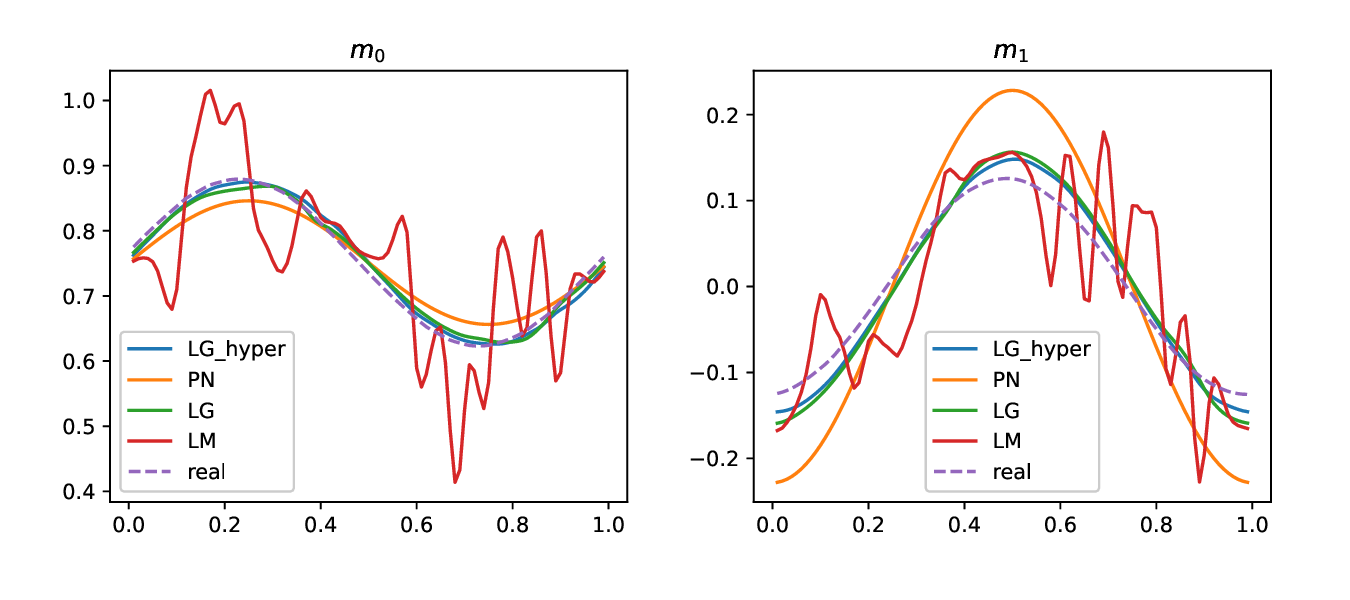}
        \caption{Comparison between the benchmark results by solving kinetic equation (``real") and predicted results using $P_N$ model (``PN"), LM model (``LM"), LG model (``LG") and LG model involving hyperbolicity (``LG$\_$hyper") with $N=3$, $\sigma=2$ at $t=0.5$.}
        \label{N3,sigma2}
    \end{figure}
    \begin{figure}[!htbp]
        \includegraphics[width=1\textwidth]{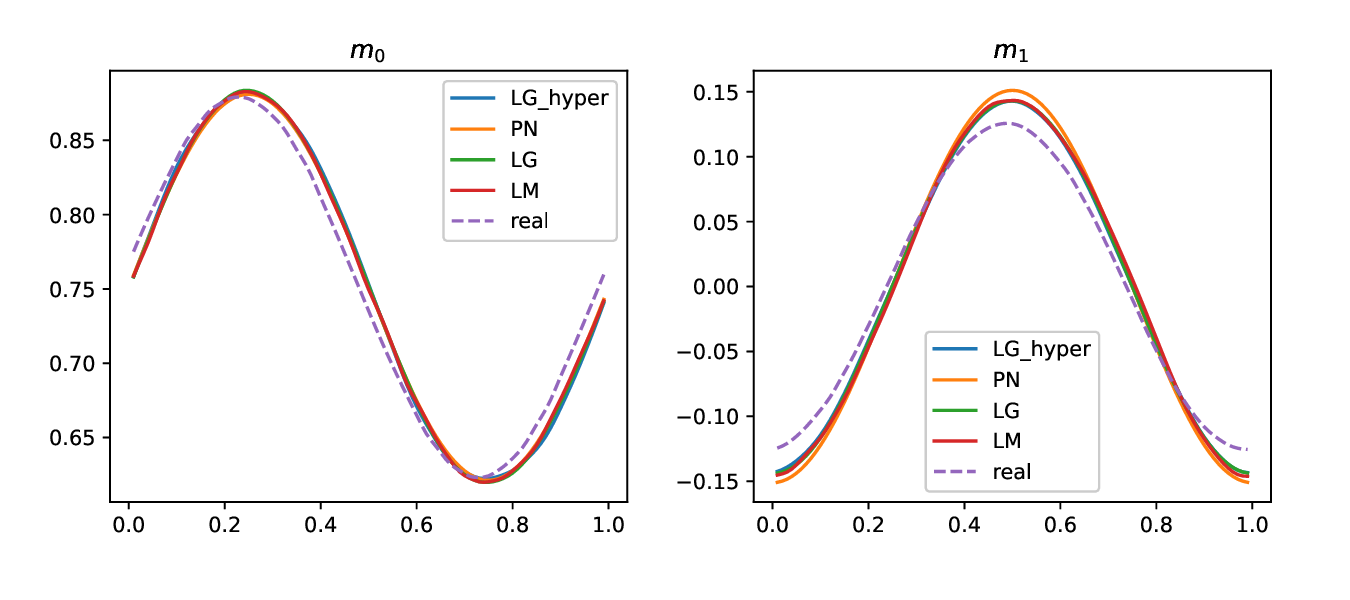}
        \caption{Comparison between the benchmark results by solving kinetic equation (``real") and predicted results using $P_N$ model (``PN"), LM model (``LM"), LG model (``LG") and LG model involving hyperbolicity (``LG$\_$hyper") with $N=5$, $\sigma=2$ at $t=0.5$.}
        \label{N5,sigma2}
    \end{figure}
    
The incompetence of the LG model in the case $\sigma=2, N=1$ can be partially explained if we analyze the relative $L^2$ error when predicting $\partial_{x} m_{N+1}$ using our neural network, as shown in Fig.~\ref{relative_l2_various_sigma_N1}. It can be observed that when $N=1, \sigma=2$, the saturated $L^2$ training error is about $0.08$, which is considerably more significant than for other choices of $N$ and $\sigma$. This explains the large deviation from the reference solution when using LG method in this case. We can control the oscillation by adding hyperbolicity to our model, but the prediction of $\partial_{x} m_{N+1}$ still remains inaccurate, counting for the relatively large errors of the method LG$\_$hyper in Fig.~\ref{N1,sigma2}. 

\begin{figure}[!htbp]
    \centering \includegraphics[width=0.6\textwidth]{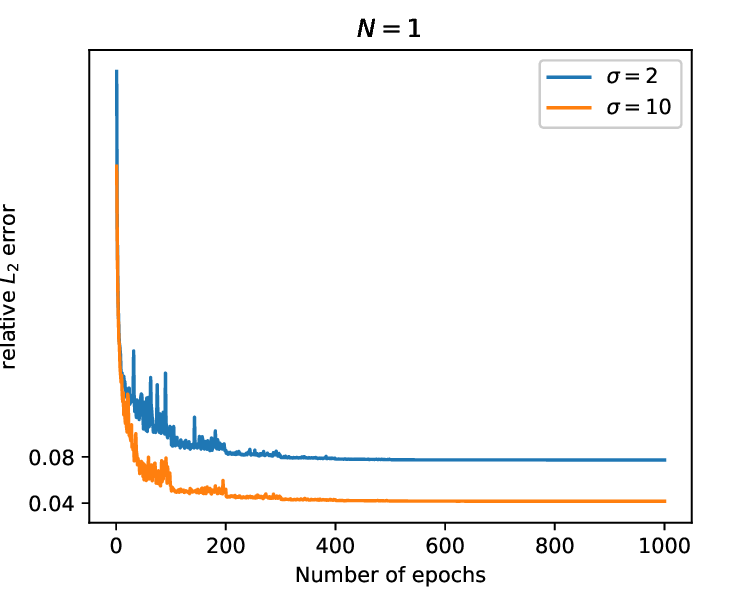}
    \caption{Relative $L^2$ errors of \eqref{relative-L2-det} for $N=1$ with $\sigma=2,10$.}
    \label{relative_l2_various_sigma_N1}
\end{figure}

%----------------------------
\subsection{\textbf{Test II: Random collision frequency}}

We now study the case with collision frequency $\sigma$ involving randomness. We set $\sigma(z)=2+z$ where $z$ follows the exponential distribution with parameter $\lambda =1$, i.e., $\pi(z)=e^{-z}$ for $z \in [0,\infty)$. 
%    Note that we choose exponential instead of uniform distribution to make the variance of the solution more significant. 
In this case, the gPC-basis functions $\{ \phi_{i}(z) \}_{i \geq 0}$ are given by the Laguerre polynomials with the recurrence relation: 
    \begin{equation}\label{Laguerre-recursion}
        \phi_{i+1}(z)= \frac{2i+1-z}{i+1}\phi_{i}(z) -\frac{i}{i+1}\phi_{i-1}(z), \quad i \geq 1,
    \end{equation}
    and $\phi_{0}(z)=1, \phi_{1}(z)=-z+1$.

In Fig.~\ref{random-collision-kernel}, we show the numerical simulation of the mean ($m_0^0, m_1^0$) of $m_0$ and $m_1$ with $N=3$ at $t=0.5$. In this example, the initial condition assumes the form of \ref{initial-condition}, with $a_0=0.9$. The training process is similar to that in the deterministic case, except that we replace the reference moments with their stochastic Galerkin counterparts. When performing the Galerkin expansion, we choose the order of truncation to be $K=3$.  One can observe that our proposed LG model performs much better than the $P_N$ model, especially when predicting the mean of $m_1$. Performances of our method among various choices of $K$ and $N$ will be examined in test III(b) below.

\begin{figure}[!htbp]
    \includegraphics[width=1\textwidth]{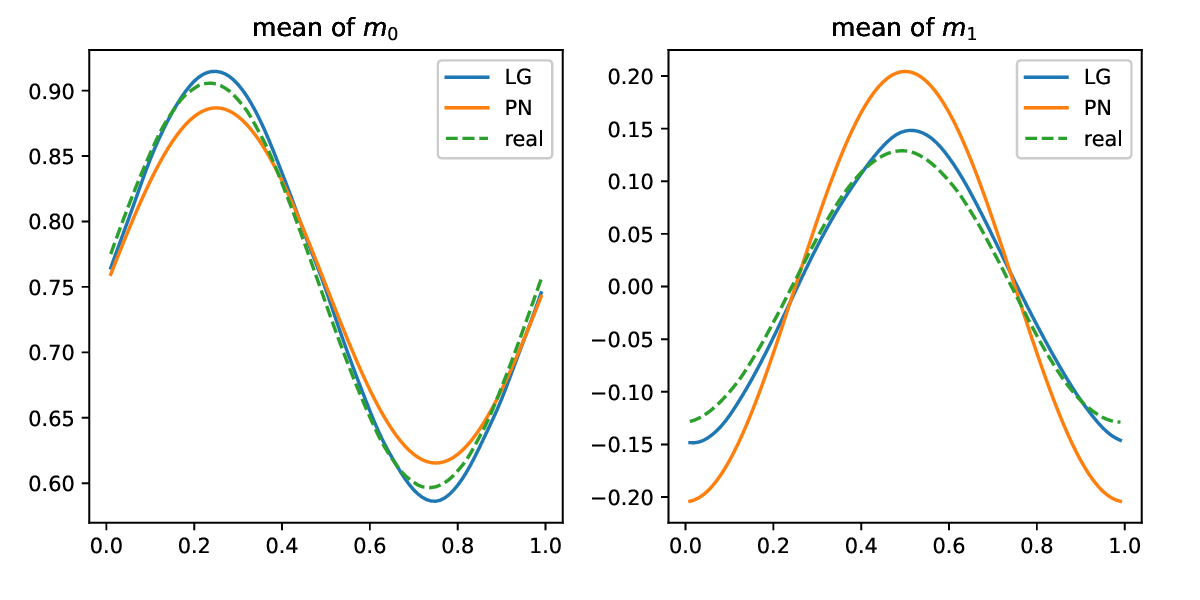}
    \caption{Comparison of the mean ($m_0^0, m_1^0$) of $m_0$ and $m_1$ between the benchmark results by solving kinetic equation (``real") and predicted results using $P_N$ model (``PN") and our proposed LG model (``LG") for random collision frequency with $N=3$ at $t=0.5$.}
    \label{random-collision-kernel}
\end{figure}

\subsection{\textbf{Test III: Random initial data}}
In the following two tests, we consider the initial data containing uncertainties. The collision frequency is constant and set as $\sigma=2$ in both tests below. 

\vspace{4mm}
\noindent\textbf{Test III (a): } %(Randomness in coefficient)}
    We study the problem with uncertain initial data, which is given by 
    \begin{equation}\label{random-initial-data-1}
        f_{0}(x,v,z)=\frac{\e^{-v^2}}{\sqrt{\pi}}(3+(1+z)\sin(2\pi x)),
    \end{equation}
    where $z$ follows the uniform distribution on $[-1,1]$, i.e., $\pi(z)=\frac{1}{2}$ for $z \in I_z = [-1,1]$. 
    In this case, the gPC-basis functions $\{ \phi_{i}(z) \}_{i \geq 0}$ are given by the Legendre polynomials in the recurrence relation: 
    \begin{equation}\label{Legendre-recursion}
        \phi_{i+1}(z)= \frac{2i+1}{i+1}z\phi_{i}(z) -\frac{i}{i+1}\phi_{i-1}(z), \quad i \geq 1.
    \end{equation}
    with $\phi_{0}(z)=1, \phi_{1}(z)=z$. 
The basis functions are normalized in our simulation.  

In Fig.~\ref{random-initial-1}, we show the numerical solutions of the mean ($m_{0}^{0}, m_{1}^{0}$) of $m_0$ and $m_1$ with $N=3$ at $t=0.5$. We again set the order of truncation to be $K=3$. Our proposed LG model accurately reproduces the mean of $m_0$, while the error in the mean of $m_1$ is noticeable but significantly smaller compared to that in the $P_N$ model.

\begin{figure}[!htbp]
    \includegraphics[width=1\textwidth]{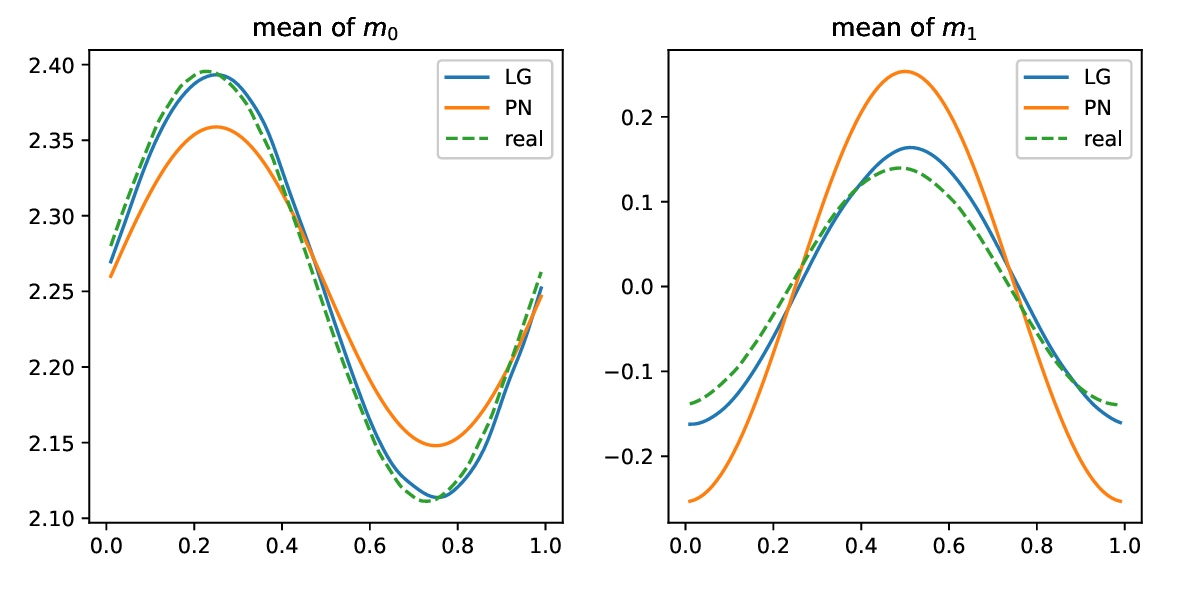}
    \caption{Comparison of the mean ($m_0^0, m_1^0$) of $m_0$ and $m_1$ between the benchmark results by solving kinetic equation (``real") and predicted results using $P_N$ model (``PN") and our proposed LG model (``LG") for random initial data \eqref{random-initial-data-1} with $N=3$ at $t=0.5$.}
    \label{random-initial-1}
\end{figure}

\noindent\textbf{Test III (b): } 
   In the last example, we assume the initial data contains uncertainty and is given as
    \begin{equation}\label{random-initial-data-2}
        f_{0}(x,v,z)=\frac{\e^{-v^2}}{\sqrt{\pi}}(2+\sin(2\pi x(1+z))),
    \end{equation}
The setting for the random variable $z$ and the choices of gPC basis functions are the same as in Test III (a) above. In Fig.~\ref{random-initial-2}, we illustrate the comparison for the mean ($m_0^0, m_1^0$) approximated by the $P_N$ model and our proposed LG model, with $N=3$ and truncation order $K=3$. The $P_N$ model exhibits significant errors in solving the system, whereas the LG model effectively captures the random effects, even as the true solutions exhibit more oscillatory behavior compared to the previous test. The deviation in the LG model, particularly for $m_1$, becomes noticeable; however, the errors remain substantially smaller than those observed in the $P_N$ model. 

\begin{figure}[!htbp]
    \includegraphics[width=1\textwidth]{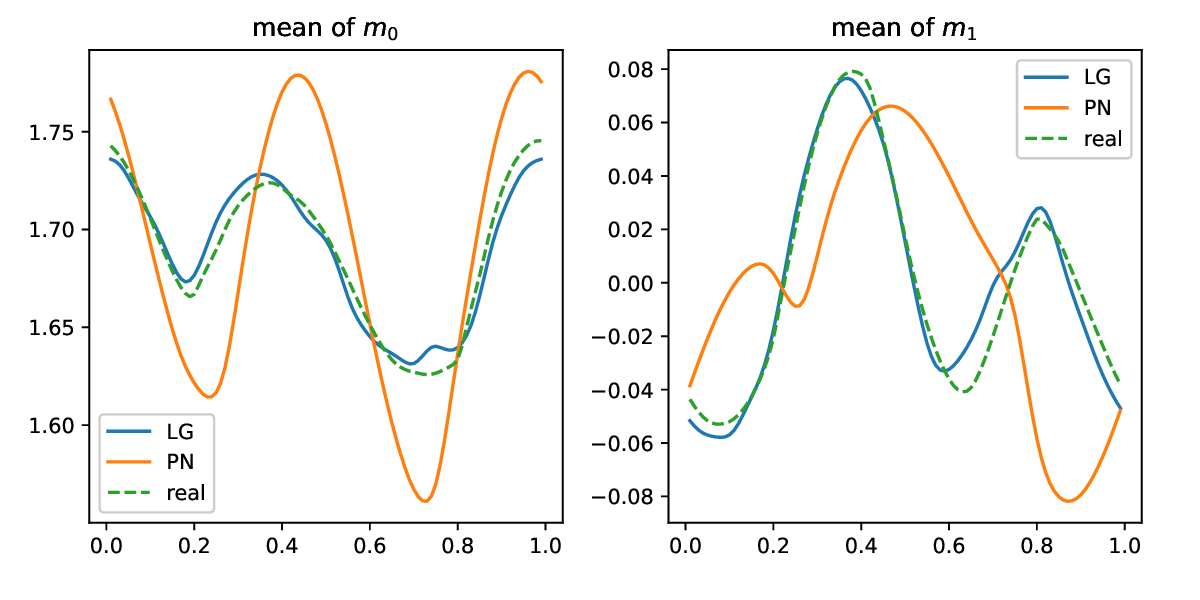}
    \caption{Comparison of the mean ($m_0^0, m_1^0$) of $m_0$ and $m_1$ between the benchmark results by solving kinetic equation (``real") and predicted results using $P_N$ model (``PN") and our proposed LG model (``LG") for random initial data \eqref{random-initial-data-2} with $N=3$, $K=3$ at $t=0.5$.}
    \label{random-initial-2}
\end{figure}

Lastly, we examine the performance of our method in the context of Test III (b), when the number of moments $N$ and the order of gPC truncation $K$ vary. In Fig.~\ref{N3_sigma2_sto_3_various_K}, we fix $N=3$ and let $K$ vary from $1$ to $5$. When $K$ is small, e.g. $K=1$, our solution exhibits large spurious oscillations. As we increase $K$, the oscillations get regulated, but the improvement becomes increasingly marginal when $K$ grows beyond $3$. Therefore, we set $K=3$ for our tests to balance between the precision of the solutions and the computational costs. Similar patterns can be observed when we fix $K=3$ and let $N$ vary from $1$ to $5$, as shown in Fig.~\ref{K3_sigma2_sto_3_various_N}. 

    \begin{figure}[!htbp]
    \includegraphics[width=1\textwidth]{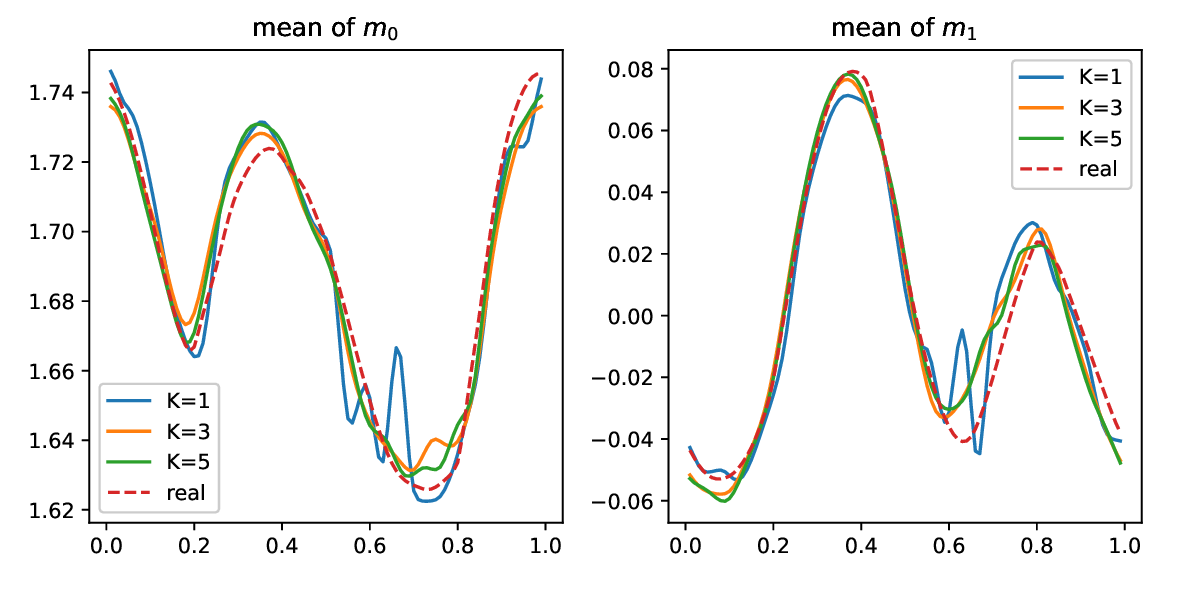}
    \caption{Comparison of the mean ($m_0^0, m_1^0$) of $m_0$ and $m_1$ when using different values of $K$ to truncate the gPC expansion. The moment system is closed using the LG model for $N=3$, $t=0.5$ with random initial data \eqref{random-initial-data-2}.}
    \label{N3_sigma2_sto_3_various_K}
    \end{figure}

    \begin{figure}[!htbp]
    \includegraphics[width=1\textwidth]{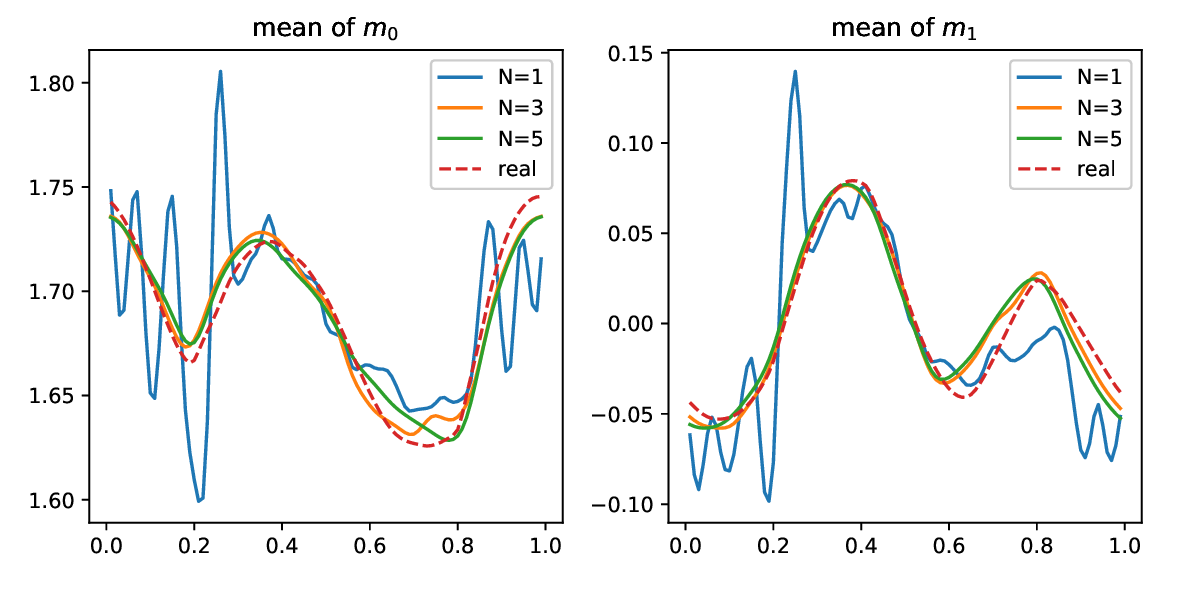}
    \caption{Comparison of the mean ($m_0^0, m_1^0$) of $m_0$ and $m_1$ when using different values of $N$ to close the moment system under the LG model at $t=0.5$ with random initial data \eqref{random-initial-data-2}. The truncation order is fixed at $K=3$ for various $N$.}
    \label{K3_sigma2_sto_3_various_N}
    \end{figure}

%------------------------------------------------------
\section{Conclusion}
\label{sec:conclusion}
In this work, we develop a ML-based moment closure model for the linear Boltzmann equation, addressing both deterministic and stochastic settings. By using neural networks to approximate the spatial gradient of the unclosed highest-order moment, our approach achieves effective training to close the moment system.
To guarantee its global hyperbolicity and stability, we imposed constraints for ensuring the symmetrizable hyperbolicity. For the stochastic problem, we incorporated a gPC-based SG method to discretize the random variables, transforming the problem into one that is similar to the deterministic setting. 
Several numerical experiments validate the proposed framework, highlighting its stability and accuracy in achieving reliable moment closures for kinetic problems with (or without) uncertainties. These results underscore the potential of incorporating ML techniques to the moment closure of the more complicated nonlinear Boltzmann equation in our future work. \\

\noindent \textbf{Data Availability} Enquiries about data availability should be directed to the authors.

\section*{Declarations}
\textbf{Competing interests} The authors have not disclosed any competing interests.

%------------------------------------------------------
\appendix

\section*{\centering \textnormal{APPENDIX.} \textbf{Parameterization of random inputs}}

In this appendix, we discuss how to manage the randomness existing in our model. The key step is to parameterize the random inputs by a finite set of independent random variables to make computational simulations plausible. If the random inputs are already given in the form of finitely many random parameters with proper probability distribution, e.g. jointly Gaussian, then parametrization is straightforward, e.g. using Cholesky decomposition. However, In many cases, the random inputs are formulated by random processes, which are often characterized by a continuous index $t \in T$. Then we need to apply dimension reduction techniques to approximate the processes using finitely many random variables. One of the most widely used techniques in this regard is the Karhunen-Loeve(KL) expansion, see \cite{Hajek}, \cite{Xiu}, \cite{Sullivan}. 

For a random process $ \{Y_{t}(\omega) \}_{t \in T}$(we use the notation $Y_t$ as an abbreviation) with mean $\mu_{Y}(t)$ and autocorrelation function $R_{Y}(t,s)=\mathbb{E}[Y_tY_s]$, its KL expansion, if exists, is given by 

\begin{equation}\label{def of KL expansion}
    Y_{t}(\omega) = \sum_{i=1}^{\infty} \psi_{i}(t) Y_{i}(\omega)
\end{equation}

\noindent where the series converges in the mean square sense(m.s.), $\{\psi_{i}\}$ is an orthonormal family of functions in $L^2(T)$ and $Y_{i}$'s are mutually orthogonal, i.e., $\mathbb{E}[Y_{i}Y_{j}]=0 ~~\forall i \neq j$. The existence of KL expansion for $Y_{t}$ is guaranteed by Mercer's theorem(see \cite{Hajek} for more details), provided that the random process $Y_{t}$ is m.s. continuous, i.e., $R_Y(t,s)$ is continuous over $T \times T$. 

We now analyze how to derive $\psi_{i}$ and $Y_i$ if the KL expansion for $Y_t$ exists. The KL expansion for $Y_t$ can be viewed as an analogue of decomposing $f \in L^2(T)$ with respect to an orthonormal family $\{\psi_{i}\}$, i.e., 

\begin{equation*}
    f=\sum_{i=1}^{\infty} f_{i}\psi_{i}
\end{equation*}  

\noindent where $f_i = \int_{T} f\psi_{i} ~dt \in \mathbb{R}$ are the Fourier coefficients. In the KL setting, once an orthonormal family $\{\psi_{i}\}$ is chosen, we can define the "Fourier coefficients" $Y_i$ analogously by 

\begin{equation}\label{Fourier coefficient for Y_i}
    Y_i(\omega) = \int_{T} Y_{t}(\omega)\psi_{i}(t) ~dt.
\end{equation}

\noindent The only difference is that the coefficients for the KL expansion are random variables instead of real numbers. The challenge now is to pick $\{\psi_{i}\}$ properly so that the coefficients $\{Y_{i}\}$ are mutually orthogonal. This can be accomplished by the following lemma, whose proof is included in \cite{Hajek}.  

\begin{lemma}\label{eigenvalue problem for psi_i}
    Suppose $Y_t$ is m.s. continuous and \eqref{def of KL expansion} holds for $Y_t$ with $\{\psi_i\}$ orthonormal and $\{Y_i\}$ not necessarily mutually orthogonal. Then it is a KL expansion(i.e., $Y_i$'s are mutually orthogonal) if and only if $\psi_i$'s are eigenfunctions of $R_Y$: 

    \begin{equation*}
        R_Y(\psi_i) = \lambda_i \psi_i, 
    \end{equation*}

    \noindent where $R_Y$ is an operator on $L^2(T)$ given by $R_Y(\psi)(t)=\int_{T} R_Y(t,s)\psi(s) ~ds$ for any $\psi(t) \in L^2(T)$. In case \eqref{def of KL expansion} is a KL expansion, the eigenvalues are given by $\lambda_i = \mathbb{E}[|Y_i|^2]$. 
\end{lemma}

\noindent In summary, if $Y_t$ is m.s. continuous, its KL expansion can be established by first solving an eigenvalue problem related to the autocorrelation function $R_Y(t,s)$ to obtain an orthonormal family $\{ \psi_i \}$, followed by a computation of the "Fourier coefficients" $Y_i(\omega)$ associated to this orthonormal family. Once the KL expansion is established, the analysis of $Y_t$ can be naturally transformed into the analysis of the coefficients $Y_i$. 

For practical purposes, we need to truncate the series appeared in \eqref{def of KL expansion} to obtain a finite dimensional parametrization of the random process, i.e., 

\begin{equation}\label{KL expansion with truncation}
    Y_{t}(\omega) \approx \sum_{i=1}^{d} \psi_{i}(t) Y_{i}(\omega), \hspace{0.8cm} d \geq 1
\end{equation}

\noindent In most situations, the eigenvalues $\lambda_{i}$ as appeared in lemma \ref{eigenvalue problem for psi_i} will decay as $i$ increases. Hence we can choose the truncation order $d$ based on the decay rate of the eigenvalues. For more details, we refer the readers to \cite{Xiu}. Once \eqref{KL expansion with truncation} is established, we can represent the random process $Y_t$ by finitely many orthogonal random variables $Y_i$ as we desire. Note that in general $Y_i$'s are not mutually independent, unless additional assumptions on $Y_t$ are made, e.g. $Y_t$ is a Gaussian process with zero mean. We will not pursue further in this direction and shall be content with finite representation of $Y_t$ by orthogonal random variables. Some remarks are in order. 

\begin{remark}
    The condition of m.s. continuity to guarantee the existence of a KL expansion is not very restrictive. Many random processes we use for modeling, e.g. Brownian motion and Poisson process, satisfy this property. 
\end{remark}

\begin{remark}
    We assume the distribution of the random process $Y_t$ is prescribed. Hence we can derive the probability distributions of $Y_{i}$ using \eqref{Fourier coefficient for Y_i}. This is especially straightforward and useful when $Y_{t}$ is a Gaussian process. 
\end{remark}

\begin{remark}
    The truncated KL expansion \eqref{KL expansion with truncation} identifies the "most accurate" $d$-dimensional approximation of $Y_t$ in the sense that it minimizes $\mathbb{E}[\| Y_t - Z_t\|^2]$ over all $d$-dimensional random processes $Z_t$. A random process $Z_t$ is said to be $d$-dimensional if it has the form $Z_{t}(\omega) = \sum_{i=1}^{d} \phi_{i}(t) Z_{i}(\omega)$ for any $d$ random variables $Z_1,...,Z_d$ and functions $\phi_1,...,\phi_d$. 
\end{remark}

\bibliographystyle{spmpsci}
\bibliography{Ref.bib}

\end{document}